\newtheorem{thm}[equation]{Theorem}
\newtheorem{prop}[equation]{Proposition}
\newtheorem{cor}[equation]{Corollary}
\newtheorem{lemma}[equation]{Lemma}
\theoremstyle{definition}
\theoremstyle{remark}
\newtheorem{rem}[equation]{Remark}
\renewcommand{\subsection}{\@startsection{subsection}{2}{0pt}{-3ex
plus -1ex minus -0.2ex}{-2mm plus -0pt minus
-2pt}{\normalfont\bfseries}}
\renewcommand{\subsubsection}{\@startsection{subsubsection}{2}{0pt}{-3ex
plus -1ex minus -0.2ex}{-2mm plus -0pt minus
-2pt}{\normalfont\bfseries}} \makeatother
\numberwithin{equation}{section}
\newcommand{\erem}{\hfill$\lozenge$\end{rem}\vskip 3pt }
\newcommand{\iso}{{\;\stackrel{_\sim}{\to}\;}}
\newcommand{\beq}{\begin{equation}\label}
\newcommand{\eeq}{\end{equation}}
\newcommand{\into}{\hookrightarrow}
\newcommand{\onto}{\twoheadrightarrow}
\newcommand{\Aut}{\operatorname{Aut}}
\newcommand{\en}{\enspace }
\def\R{\mathbb{R}}
\def\id{\mathrm{id}}
\def\Z{{\mathbb Z}}
\def\1{\mathbf{1}}
\begin{document}
\title{\qquad\en Looping of the numbers game and the alcoved hypercube}
\author{Q\"endrim R. Gashi, Travis Schedler, and David Speyer}

\begin{abstract}
We study in detail the so-called \emph{looping} case of Mozes's game of numbers, which concerns the (finite) orbits in the reflection representation of affine Weyl groups situated on the boundary of the Tits cone.  We give a simple proof that all configurations in the orbit are obtainable from each other by playing the numbers game, and give a strategy for going from one configuration to another. The strategy gives rise to a partition of the finite Weyl group into finitely many graded posets, one for each extending vertex of the associated extended Dynkin diagram.  These are selfdual and mutually isomorphic, and dual to the triangulation of the unit hypercube by reflecting hyperplanes, studied by many authors. Unlike the weak and Bruhat orders, the top degree is cubic in the number of vertices of the graph.  We explicitly compute the Hilbert polynomial of the poset.
\end{abstract}

\maketitle

\section{Introduction}

\subsection{The numbers game}

Mozes's game of numbers \cite{mozes}, which originated from (and generalizes) a 1986 IMO problem, has been widely studied (cf. \cite{Pro-bru, Pro-min, DE, Erik-no1, Erik-no2, erikconf,   Erik-no3, Erik-no4, eriksson, Wild-no1, Wild-no2}), and yields useful algorithms for computing with the root systems and reflection representations of Coxeter groups (see \cite[\S 4.3]{BB} for a brief summary).

We briefly recall the numbers game. Consider a Coxeter group associated to generators $s_i, i \in I$, and relations $(s_i s_j)^{n_{ij}}$, for $n_{ij} =n_{ji} \in \Z_{\geq 1} \sqcup \{\infty\}$ ($n_{ii} = 1$ for all $i$, and $n_{ij} \geq 2$ for $i \neq j$). We associate to this an unoriented graph $\Gamma$ with no loops and no multiple edges, with vertex set $I$, such that two vertices $i,j$ are adjacent if $n_{ij} \geq 3$. Consider also a choice of Cartan matrix $C = (c_{ij})$ such that $c_{ii} = 2$ for all $i$, $c_{ij} = 0$ whenever $i$ and $j$ are not adjacent, and in the case $i,j$ are adjacent, $c_{ij}, c_{ji} < 0$ and either $c_{ij} c_{ji} = 4 \cos^2(\frac{\pi}{n_{ij}})$ (when $n_{ij}$ is finite), or $c_{ij} c_{ji} \geq 4$ (when $n_{ij} = \infty$).

The \emph{configurations} of the game consist of vectors $\R^I$, considered as labelings of the graph $\Gamma$ by numbers called \emph{amplitudes}.  The moves of the game are as follows: for any vector $v = (v_i)_{i \in I} \in \R^I$ and any vertex $i \in I$ such that $v_i < 0$, one may perform the following move, called \emph{firing the vertex $i$}: $v$ is replaced by the new configuration $f_i(v)$, defined by
\begin{equation}
f_i(v)_j =
\begin{cases}
-v_i, & \text{if $j = i$}, \\
v_j - c_{ij} v_i, & \text{if $j$ is adjacent to $i$}, \\
v_j, & \text{otherwise}.
\end{cases}
\end{equation}
The entries $v_i$ of the vector $v$ are called \emph{amplitudes}.  The game terminates if all the amplitudes are nonnegative.  Let us emphasize that \emph{only negative-amplitude vertices may be fired}.

Note that the operation $f_i$ is nothing but the action of the simple reflection $s_i \in W$ if $\R^I$ is in the basis of the fundamental coweights (or weights).

The following summarizes some basic known results (where $\cdot$ denotes the usual dot product in $\mathbb{R}^I$, and in the case of affine Coxeter groups, $\delta \in \mathbb{Z}^I$ denotes a generator of the kernel of the Cartan form, in the basis of simple roots; the precise definition is recalled in \S 2):

\begin{thm}
\begin{enumerate}
\item[(i)] \cite{mozes,eriksson} If the numbers game terminates, then it must terminate in the same number of moves and at the same configuration regardless of how it is played.
\item[(ii)] In the finite Coxeter group case, the numbers game must terminate.
\item[(iii)] \cite{erikconf} In the affine case, the numbers game terminates if and only if $v \cdot \delta > 0$.
\item[(iv)] \cite{erikconf} Whenever the numbers game does not terminate, it reaches infinitely many distinct configurations, except for the affine Coxeter group case where $v \cdot \delta = 0$, in which case only finitely many configurations are reached \emph{(}i.e., the game ``loops''\emph{)}.
\end{enumerate}
\end{thm}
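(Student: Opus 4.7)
The plan is to interpret each firing $f_i$ as the action of the simple reflection $s_i$ on $\R^I$ in the fundamental weight basis, so that every assertion reduces to a statement about $W$-orbits and the Coxeter length function. For (i), first I would check local confluence: when both vertices $i$ and $j$ of $v$ are fireable, a direct calculation using the Cartan entries $c_{ij}, c_{ji}$ and the braid relation $(s_i s_j)^{n_{ij}} = 1$ shows that firing one leaves the other fireable, and that the two sequences of firings of length $n_{ij}$ obtained by alternating $i$ and $j$ converge to the same configuration. Together with the termination hypothesis, Newman's diamond lemma then gives global confluence, hence a unique terminal configuration. The equal move count follows by writing the current state as $w \cdot v^{\mathrm{term}}$ with $w \in W$ of minimum length: each firing of a vertex with negative amplitude strictly decreases $\ell(w)$, so every terminating play uses exactly $\ell(w^{\mathrm{init}})$ moves.

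For (ii), I would use that the closure of the dominant chamber is a fundamental domain for finite $W$, so every configuration has the form $w \cdot v^{\mathrm{dom}}$ and the game must terminate in at most $\ell(w)$ moves. For (iii) and (iv), the central invariant is $v \cdot \delta$, preserved by the game because $\delta$ spans the kernel of the $W$-invariant Cartan form. In the affine case I would split into three regimes. When $v \cdot \delta > 0$, the orbit lies in the open Tits cone and meets the open dominant chamber, so the length argument of (ii) still applies and the game terminates. When $v \cdot \delta < 0$, the orbit misses the dominant chamber, so the game cannot terminate; moreover, for $\lambda \in Q^\vee$ the translation $t_\lambda \in W_{\mathrm{aff}} = W_{\mathrm{fin}} \ltimes Q^\vee$ shifts $v$ by an amount scaled by the nonzero level $v \cdot \delta$, producing infinitely many distinct configurations. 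When $v \cdot \delta = 0$, the same translations fix $v$ pointwise, so the $W_{\mathrm{aff}}$-action on the hyperplane $\delta^\perp \subset \R^I$ factors through $W_{\mathrm{fin}}$ and the orbit is finite---the looping case. For non-affine infinite Coxeter groups, non-termination forces an infinite orbit by the faithful and proper action of $W$ on the interior of the Tits cone.

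The hard part will be the translation analysis underlying (iv): I need the explicit action of $t_\lambda \in W_{\mathrm{aff}}$ on $\R^I$, showing that $t_\lambda(v) - v$ lies in the plane spanned by $\lambda$ and $\delta$ with $\lambda$-coefficient proportional to $v \cdot \delta$. I would verify this by expressing translations as specific products of simple reflections (for instance starting from a Coxeter element of $W_{\mathrm{aff}}$) and computing directly, then tracking how $Q^\vee$ sits inside $W_{\mathrm{aff}}$. From this formula, both the triviality of translations on $\delta^\perp$ and the infiniteness of the orbit off $\delta^\perp$ fall out immediately. The only other delicate step is the local confluence verification in (i), which reduces to a routine case analysis over the finite values of $n_{ij}$ and follows from the braid relations.
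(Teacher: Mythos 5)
The paper does not prove this theorem: it is stated as a summary of known results, with (i) attributed to \cite{mozes,eriksson} and (iii)--(iv) to \cite{erikconf}, so there is no in-paper argument to compare yours against. Your proposal is essentially the standard proof from that literature: local confluence via the rank-two ``polygon property'' (alternately firing $i$ and $j$ for $n_{ij}$ steps closes the diamond with equal lengths on both sides) plus a diamond-lemma induction for strong convergence, and the $W$-invariant level $v \cdot \delta$ together with the translation lattice $Q^\vee \subset W_{\mathrm{aff}}$ for the affine trichotomy. Indeed, the same polygon-property computation is exactly what the authors redo, in modified form, in their proof of Lemma~\ref{termination}, and your formula for $t_\lambda$ is their equation~\eqref{translations}: $T_j(v) = v + (\delta \cdot v)(\omega^{(j)} - \delta_j \omega^{(i_0)})$, which indeed fixes $\delta^\perp$ pointwise and translates off it. Two points deserve care. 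First, plain Newman's lemma only yields a unique normal form, not a unique number of moves; you need the length-preserving refinement, which your $n_{ij}$-step diamonds do supply, but you should make the induction on the length of a terminating sequence explicit (and note that $n_{ij} = \infty$ with both $v_i, v_j < 0$ is incompatible with the termination hypothesis, so the diamonds always close). Second, the non-affine branch of (iv) is the only genuinely under-argued step: ``faithful and proper action on the interior of the Tits cone'' does not by itself exclude that a play outside the Tits cone revisits a configuration, since the relevant point is not in that interior; one needs an additional argument (e.g.\ that a repeated configuration forces a nontrivial relation incompatible with the unbounded growth of the inversion set along the play), which is the content of the cited result of Eriksson.
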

In particular, we will be concerned here with the \emph{looping} case.

We explain briefly how our results relate to standard constructions in
the theory of Coxeter groups. The configuration space $\R^I$ is the
reflection representation of the Coxeter group $W$.  The subset of
$\R^I$ where the numbers game terminates is called the Tits cone. The
Tits cone is naturally divided into simplicial cones, with the maximal
cones labeled by the elements of $W$ (see \cite[Sections 4.3 and
4.9]{BB}).  Beginning with a point in the interior of one of these
maximal cones, the cones we travel through form a descending chain in
the \emph{weak order} (whose definition we recall in \S \ref{the
  poset}); the restriction of firing only negative amplitudes means
that we only move downward. So results (i) and (ii), in part, say that
weak order is graded and has a unique minimal element.

Our results study the affine case, not within the Tits cone, but at
its boundary. In the affine case, the boundary of the Tits cone is a
hyperplane, divided into finitely many simplicial cones. The maximal
cones are indexed by the elements of a finite Coxeter group,
$W_0$. However, our problems do not reduce to the numbers game on
$W_0$, but instead reveal several new and interesting combinatorial
structures.

\subsection{Motivation and results}

The original motivation of this paper was the following question: in the affine case with $v \cdot \delta = 0$, can one always return to the initial configuration?  This was asserted to be true in \cite{erikconf},\footnote{Also, there it was asserted that there is a way to play the numbers game that passes through all configurations in the Weyl orbit of the vector $v$ exactly once (i.e., that a Hamiltonian cycle exists in the directed graph whose vertices are this Weyl orbit and directed edges are moves of the numbers game). We do not have either a proof or counterexample to this assertion.} but a proof was not provided. Our first goal is to provide a simple proof in the affirmative.  In fact, we prove more: we give in \S \ref{strloopsec} a \emph{strategy} for going from any configuration $v$ to any element of its Weyl orbit in a number of moves cubic in the number of vertices, and our strategy is optimal in certain cases.

In the process, we find (in \S \ref{posetsec}) that, following our strategy, the graph of obtainable configurations leads to a canonical decomposition of the finite Weyl group associated to our graph into graded selfdual posets, one for each extending vertex, whose maximal degree is cubic in the number of vertices (unlike the weak order poset, which has quadratic degree in the number of vertices). The vertices are a canonical choice of coset representatives modulo the subgroup which acts by automorphisms on the extended Dynkin graph. Moreover, we show that the graph of this poset coincides with the dual of the triangulation of the unit hypercube in the reflection representation of the affine Weyl group. This triangulation has been studied in many places, notably recently in \cite{LamPost} in type $A$.

Finally, in \S \ref{hilbpolysec}, we compute the Hilbert polynomial of this poset, thus significantly strengthening our initial result, and give explicit formulas.  Going from the lowest to top degree element of the poset involves a canonical involution of the extended Dynkin graph which we also compute. In \S \ref{combintsec}, we give a combinatorial interpretation and proof of this formula in the type $A$ cases.

Evaluating the Hilbert polynomial at $1$ in two ways yields a curious identity (which was unknown to us): Let $W_0$ be a finite Weyl group associated to a Dynkin graph $\Gamma_0$ with vertex set $I_0$, and let $W, \Gamma, I$ be the corresponding affine Weyl group and extended Dynkin graph, with $I = I_0 \sqcup \{i_0\}$. Let $m_i$ be the Coxeter exponents of $W_0$. Then,
\begin{equation}\label{curid}
\prod_i (m_i(m_i+1)) = \#(\text{extending vertices of $\Gamma$}) \cdot \prod_{i \in I} l(t_i),
\end{equation}
where the elements $t_i \in W$ are those that take the dominant
chamber of $\mathcal{H} := \{v \in \R^{\widetilde I} \mid \delta \cdot
v = 1\}$ (i.e., the locus where all coordinates are nonnegative) to
its translate by $\omega^{(i)} - \delta_i \omega^{(i_0)}$, for
$\omega^{(i)}$ is the $i$-th fundamental coweight (which is the $i$-th
basis vector of our configuration space $\R^I$), and $l(t_i)$ is the
length of $t_i$, i.e., the minimum number of simple reflections whose
product is $t_i$.

\subsection{Acknowledgements}
We thank T. Lam for essential discussions about the Hilbert
polynomials of our posets. The first author is an EPDI fellow, the
second author is an AIM fellow, and the third author is a Clay
research fellow.  The first two authors were supported by Clay Liftoff
fellowships.  The second author was also partially supported by the
University of Chicago's VIGRE grant. We thank the University of
Chicago, MIT, and the Isaac Newton Institute for Mathematical Sciences
for hospitality.

\section{Preliminaries on affine Coxeter groups}
In this note, we will be concerned with the numbers game when $\Gamma$
is an extended Dynkin graph, and $C$ is the standard integral matrix
associated to it (we can generalize this to the case where $C$ is
nonintegral but satisfies $c_{ij} = c_{ji}$ whenever $n_{ij}$ is odd:
see Remark \ref{nonintrem}). In the simply-laced case (types
$\widetilde{A_n}, \widetilde{D_n}$, or $\widetilde{E_n}$), this just
means that all the entries of $C$ are $2$ or $-1$.  In all other
types, $\Gamma = \Gamma'/S$ where $\Gamma'$ is a simply-laced graph,
$S$ is a (nontrivial) subgroup of $\Aut(\Gamma')$, and the matrix $C =
C_\Gamma = (c_{ij})$ is obtained from the matrix $C_{\Gamma'}$ as
above by the usual folding procedure, i.e., for $i$ and $j$ adjacent,
$-c_{ij}$ is equal to the number of elements of $S$ which fix $i$
divided by the number which fix both $i$ and $j$.

The interesting phenomena already occur in the simply-laced case, and the reader can assume this to be the case if preferred.

We will make use of the root systems associated to Dynkin and extended Dynkin graphs. Let $\Delta, \Delta_+$ be the set of roots and positive roots, respectively. We will view $\Delta, \Delta_+ \subset \Z^I$ in the basis of simple roots.  Then, the dot product between roots and configuration vectors (viewed as coweights) is the canonical pairing.

To be precise, by roots we mean what are sometimes called real roots, i.e., the images of the simple roots under the Coxeter group action dual to the $f_i$ action: $s_i \alpha = \alpha - \langle \alpha, \alpha^{(i)} \rangle \alpha^{(i)}$, where $\alpha^{(i)} \in \Delta_+ \subset \Z^I$ is the $i$-th simple root and $\langle \,, \rangle$ is the Cartan form, $\langle \alpha^{(i)}, \alpha^{(j)} \rangle = c_{ij}$.

For an extended Dynkin graph $\Gamma$, a vertex is called \emph{extending} if the complement of the graph is a Dynkin graph with respect to which the given vertex is the extending vertex. Finally, we will make use of the element $\delta \in \Z_+^I$, uniquely given so that $\delta_i = 1$ at all extending vertices of the graph, and $\langle \delta, \alpha \rangle = 0$ for all $\alpha \in \Delta$.

We emphasize that $\Delta$ is the set of roots for the affine Coxeter group.
On the occasion that we need to refer to the root system of the associated finite Coxeter group, we write $\Delta^0$; then $\Delta^0_+$ will denote the set of positive roots from $\Delta^0$.

\section{Strong looping of the numbers game}\label{strloopsec}
In this section, we prove and generalize the following:
\begin{thm} \label{strongloopthm} Whenever the numbers game loops, one can always return to the initial configuration.
\end{thm}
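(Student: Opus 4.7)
The plan is to prove the stronger statement that the reachable set $R(v) := \{u \in \R^I : u \text{ is reachable from } v \text{ by the numbers game}\}$ equals the entire $W$-orbit $W \cdot v$. From this the theorem follows immediately: every $u \in R(v)$ satisfies $u \cdot \delta = 0$ (a quantity preserved by firings), so $R(u) = W \cdot u = W \cdot v \ni v$, and applying this with $u = v$ yields the desired return cycle.

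To prove $R(v) = W \cdot v$, I would fix an extending vertex $i_0 \in I$, let $I_0 := I \setminus \{i_0\}$, and let $W_0 := \langle s_j : j \in I_0 \rangle$ be the associated finite Weyl group. On the hyperplane $H := \{u \in \R^I : u \cdot \delta = 0\}$, the translations in $W \cong W_0 \ltimes Q^\vee$ act trivially (they shift coweights by multiples of $u \cdot \delta$, which vanish on $H$), so $W$ acts through $W_0$ and $W \cdot v = W_0 \cdot v \subset H$. Playing the $W_0$-numbers game from $v$---firing only vertices in $I_0$---terminates by Theorem 1.1(ii) at the unique $W_0$-dominant representative $w^* \in W_0 \cdot v$, with uniqueness ensured by Theorem 1.1(i). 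So $w^* \in R(v)$, and it suffices to prove $R(w^*) = W_0 \cdot w^*$. At $w^*$ the only valid firing is $i_0$: the constraint $w^* \cdot \delta = 0$ together with $\delta_{i_0} = 1$, $\delta_j > 0$ for all $j$, and $w^*_j \ge 0$ for $j \in I_0$ forces $w^*_{i_0} \le 0$, with equality only if $w^* = 0$ (excluded in the loop case since the orbit has more than one element).

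Firing $i_0$ at $w^*$ yields $s_{i_0} w^* = r \cdot w^*$, where $r \in W_0$ is the image of $s_{i_0}$ under the action on $H$ (in simply-laced types, $r$ is the reflection through the highest root of $\Gamma_0$). From $r w^*$ the $W_0$-game returns to $w^*$, and by choosing among the reduced expressions for $r^{-1} \in W_0$---each corresponding to a valid play by Theorem 1.1(i)---one realizes any intermediate configuration of the form $g(r w^*)$ with $g \le r^{-1}$ in the right weak order on $W_0$. Iterating ``fire $i_0$, then a partial $W_0$-game'' traces out further elements of $W_0 \cdot w^*$, and the main obstacle is to show that these iterations exhaust $W_0 \cdot w^*$: namely, that the set $\{g_k r g_{k-1} r \cdots g_1 r : k \ge 0,\, g_i \le r^{-1}\}$ covers $W_0$ modulo the stabilizer of $w^*$. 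This is a combinatorial statement about the Coxeter complex of $W_0$ together with the specific position of $r$; the promised cubic bound on return-cycle length would then follow from standard length bounds on these reduced words.
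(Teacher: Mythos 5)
Your reduction to the $W_0$-dominant representative $w^*$ is sound, and your observations that $w^*_{i_0}<0$ (unless $w^*=0$) and that firing $i_0$ on the hyperplane $\delta\cdot v=0$ acts as the reflection $r$ about the highest root of $\Gamma_0$ are correct and indeed part of the paper's setup. But the proposal has a genuine gap exactly where the content of the theorem lies: you reduce everything to the assertion that the products $g_k r g_{k-1} r\cdots g_1 r$ with each $g_i\leq r^{-1}$ in weak order cover $W_0$ modulo the stabilizer of $w^*$, and you do not prove it. This is not a routine fact about the Coxeter complex; it is essentially a restatement of the theorem. Moreover, even granting that combinatorial claim, it would not suffice as stated: firing $i_0$ at an intermediate configuration $g_1 r\, w^*$ is a legal move only when the $i_0$-amplitude there is negative, i.e.\ only when the current configuration pairs positively with the highest root, so the configurations actually reachable by your iteration form a priori a \emph{proper subset} of $\{g_k r\cdots g_1 r\, w^*: g_i\leq r^{-1}\}$. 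You would need the stronger statement that this legality-constrained subset already exhausts the orbit, and nothing in the proposal addresses that.

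The paper circumvents both difficulties with a different device. It introduces the restricted game in which only vertices of amplitude $<-1$ may be fired, proves a diamond/strong-convergence lemma for that restricted game (Lemma~\ref{termination}), and shows that the only configurations in the relevant orbit with no amplitude $<-1$ are the antidominant vectors $-\rho^{(i)}$ for $i$ an extending vertex (Lemma~\ref{grholem}). Termination of the restricted game then follows from finiteness of the orbit together with a reversal--negation trick: a repeated configuration would, after reversing the path and negating every configuration, yield two restricted firing sequences of different lengths ending at the same point, contradicting Lemma~\ref{termination}. Since every configuration is obtainable from an antidominant one (by reversing the ordinary game on $\Gamma_0$), the theorem follows. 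If you wish to salvage your route, you must supply both the covering claim and the legality of the intermediate $i_0$-firings; the $<-1$ strategy is precisely what renders these issues unnecessary.
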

Recall that the numbers game loops if and only if the Coxeter group is affine and the initial configuration $v$ satisfies $\delta \cdot v = 0$. The theorem can be re-expressed as: if $\delta \cdot v = 0$, then for every element $g$ of the Coxeter group of the graph, one can go from $v$ to $g v$ by playing the numbers game.

As in the introduction, let $C$ be integral and given by an
identification $\Gamma \cong \Gamma'/S$ where $\Gamma'$ is
simply-laced. Playing the numbers game on $\Gamma$ is equivalent to
playing on the $S$-invariant configurations on $\Gamma'$.  Making this
choice of $C$ does not affect the validity of the theorem.

We will prove a stronger result, which gives a \emph{strategy} for
obtaining $v$ from $u$ whenever $v$ and $u$ are in the same (affine) Weyl
orbit, and a bound on the number of moves required (which will be
\emph{cubic} in the number of vertices of $\Gamma$).

To explain this, first note that, whenever $\delta \cdot v = 0$, 
$v$ is uniquely determined by its restriction to any subgraph
$\Gamma_0 \subsetneq \Gamma$ obtained by removing exactly one
vertex. We now fix a subgraph $\Gamma_0$ such that $\Gamma$ is the
extending graph of $\Gamma_0$ (i.e., $\Gamma_0$ corresponds to a
finite Coxeter group, and $\Gamma$ corresponds to the associated
affine Coxeter group).  Let $W_0$ be the finite Coxeter group
associated to $\Gamma_0$. For any configuration $v$ on $\Gamma$, let
$v_0$ denote the restriction to $\Gamma_0$. Similarly, for any
configuration $u$ on $\Gamma_0$, let $\widetilde {u}$ be the unique
configuration on $\Gamma$ such that $\delta \cdot \widetilde {u} = 0$
and $(\widetilde {u})_{0} = {u}$.  Then, for any configuration $v$ on
$\Gamma$ such that $\delta \cdot v = 0$, and any element $g \in W_0$,
the configuration $g v := \widetilde{g v_0}$ makes sense.  Moreover,
any sequence of moves of the numbers game can be represented as
applying an element of $W_0$, since firing the extending vertex is the
same (when $\delta \cdot v = 0$) as applying the reflection about the
maximal root of $\Gamma_0$.

Thus, we can reformulate the theorem as follows: for
any two Weyl chambers $C, C'$ of $W_0$, and every vector $v \in C$,
there exists a way to play the numbers game on $\Gamma$ to take
$\widetilde{v}$ to a vector whose restriction to $\Gamma_0$ is in
$C'$.

Next, note that one can always play the numbers game on $\Gamma_0$
until all the amplitudes on $\Gamma_0$ are nonnegative: this yields
the unique configuration obtainable from the original one which is in
the \emph{dominant} Weyl chamber. Similarly, by playing the numbers
game in reverse, one sees that every configuration is obtainable from
one in the \emph{antidominant} Weyl chamber.  Thus, to prove the
theorem, it is enough to show that one can take every vector of the
dominant Weyl chamber to a vector of the antidominant Weyl
chamber. Moreover, for this it suffices to take any single vector in
the interior of the dominant Weyl chamber: we will take the vector
$\widetilde \rho$, where $\rho$ is the vector on $\Gamma_0$ whose
amplitude is $1$ at all vertices.\footnote{$\rho$ corresponds to
  one-half the sum of all positive coroots of $\Gamma_0$ (or the sum
  of all fundamental coweights), and is important in Lie theory.}

Let $I$ be the vertex set of $\Gamma$ and $I_0$ be the vertex set of the
fixed subgraph $\Gamma_0$. Let $i_0 \in I \setminus I_0$ be the
extending vertex of $\Gamma_0$.  Set $\rho^{(i_0)} :=
\widetilde{\rho}$. For any other extending vertex $i \in I$ (i.e.,
such that the extended Dynkin graph of $\Gamma \setminus\{i\}$ is
$\Gamma$), let $\rho^{(i)} \in \Z^I$ also denote the configuration
with $\delta \cdot \rho^{(i)} = 0$ such that $(\rho^{(i)})_j = 1$ for
all $j \neq i$ (i.e., $\rho^{(i)}$ is the image of $\rho^{(i_0)}$
under an automorphism of $\Gamma$). We may now strengthen Theorem
\ref{strongloopthm}:

\begin{thm} \label{stratthm} For any $g \in W_0$, beginning with
  $\widetilde{g \rho}$ and playing the numbers game by arbitrarily
  firing vertices of amplitude $< -1$, one obtains $-\rho^{(i)}$ for
  some extending vertex $i \in \Gamma$. Regardless which moves are
  chosen, the total number of moves is the same, as is the vertex $i$.
  Moreover, this is the fastest way, under the numbers game, to get
  from $\widetilde{g \rho}$ to a configuration of the form
  $-\rho^{(i')}$. Finally, taking $g = 1$, the resulting map $i_0
  \mapsto i$ is an involution on the extending vertices of $\Gamma$.
\end{thm}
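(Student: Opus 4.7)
My plan is to reinterpret the restricted numbers game (firing only vertices of amplitude $<-1$) as a length-increasing walk in the affine Weyl group $W$, thus reducing the theorem to combinatorics of Coxeter length. To each configuration $v$ in the $W$-orbit of $\widetilde{g\rho}$ I associate a group element $w \in W$ with $v = w\cdot \widetilde{g\rho}$, and define $\ell(v)$ to be the minimal number of firings in any sequence of numbers-game moves that takes $\widetilde{g\rho}$ to $v$. The key lemma is that firing a vertex $j$ with $v_j < -1$ strictly increases $\ell$ by $1$, while firing with $-1 \le v_j < 0$ (which would be allowed in the \emph{unrestricted} game) would strictly decrease $\ell$ by $1$. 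The shift by $1$ in the amplitude threshold reflects the affine structure: the walls of $W$ acting on the hyperplane $\{\delta \cdot v = 0\}$ include hyperplanes of the form $\{v : \langle \alpha, v\rangle = k\}$ for $k \in \Z\setminus\{0\}$, and $v_j < -1$ is precisely what is needed to ensure that firing $j$ crosses such a hyperplane \emph{away} from the reference configuration $\widetilde{g\rho}$. Concretely the lemma can be verified by the identity $\langle w^{-1}\alpha_j, \widetilde{g\rho}\rangle = \langle \beta, g\rho\rangle$ when $w^{-1}\alpha_j = \beta + n\delta$, which decouples the amplitude from the imaginary-root coordinate and reduces the check to root-system bookkeeping in $W_0$.

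Granted the lemma, each restricted firing increases $\ell$ by exactly $1$, so the game terminates in a predetermined number of moves at a configuration with all amplitudes $\ge -1$; this already gives invariance of the total step count. To identify the terminal configurations, write $v = \widetilde{w\rho}$ for $w \in W_0$ and compute $(\widetilde{w\rho})_j = \mathrm{ht}(w^{-1}\alpha^{(j)})$ for $j \in I_0$ and $(\widetilde{w\rho})_{i_0} = -\mathrm{ht}(w^{-1}\alpha^{(0)})$ using the identity $\langle \alpha, \rho\rangle = \mathrm{ht}(\alpha)$. The condition ``all amplitudes $\ge -1$'' becomes: $w^{-1}$ sends each simple root of $W_0$ to a positive root or a negative simple root, and sends the highest root $\alpha^{(0)}$ to a positive simple root or a negative root. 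A direct case analysis identifies the $w$ satisfying both conditions with the extending vertices of $\Gamma$, and shows $\widetilde{w\rho} = -\rho^{(i)}$ for the corresponding $i$. Independence of the terminal $i$ from the firing choices follows from a confluence (diamond) lemma: if $v_j < -1$ and $v_k < -1$ with $j \neq k$, then $f_j v$ and $f_k v$ can be joined by further restricted firings; together with the termination established above, Newman's lemma gives a unique terminal state. Optimality is then immediate, because the restricted game realizes a reduced expression for the terminal element of $W$, so no other numbers-game sequence can be shorter.

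For the involution claim (with $g = 1$), I argue by a time-reversal and negation argument. Given a reduced restricted sequence $j_1,\ldots,j_N$ taking $\widetilde{\rho}$ to $-\rho^{(i)}$, the reversed sequence $j_N,\ldots,j_1$ applied to $\rho^{(i)}$ yields $-\widetilde{\rho} = -\rho^{(i_0)}$; verifying that this reversed sequence is itself a legal restricted play uses the anti-involution $v \mapsto -v$ on the orbit (which interchanges ``$<-1$ firings'' going forward with the reversed firings going backward) together with $\ell(w^{-1}) = \ell(w)$. Applying the first part of the theorem with the preferred extending vertex changed from $i_0$ to $i$ -- so that $\rho^{(i)}$ now plays the role of $\widetilde{\rho}$ -- the uniqueness of the terminal vertex forces the map $i_0 \mapsto i$ to be an involution. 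The main technical obstacle is the key length lemma: its proof requires carefully matching the amplitude condition $v_j < -1$ with the combinatorial condition $\ell(s_j w) > \ell(w)$, and the shift by $1$ (versus the $<0$ threshold of the ordinary numbers game) must be traced to the affine shift in the walls associated with the extending reflection $s_{i_0}$.
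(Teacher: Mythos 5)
Your reduction hinges on the key lemma that the sign of $v_j+1$ determines whether firing $j$ raises or lowers $\ell$, and this lemma is false on the boundary of the Tits cone. The identity you cite, $\langle w^{-1}\alpha_j,\widetilde{g\rho}\rangle=\langle\beta,g\rho\rangle$ for $w^{-1}\alpha_j=\beta+n\delta$, is the source of the problem rather than its solution: the amplitude is blind to the coefficient $n$, and it is precisely $n$ (together with the sign of $\beta$) that decides whether $\ell(s_jw)=\ell(w)\pm1$. Relatedly, every translation in $W$ fixes the hyperplane $\{\delta\cdot v=0\}$ pointwise, so the element $w$ with $v=w\cdot\widetilde{g\rho}$ is defined only modulo a full-rank lattice and ``the length of the element attached to $v$'' is not a function of $v$. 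A concrete counterexample to your dichotomy, using your fallback definition of $\ell$ as graph distance: in $\widetilde{A_2}$ with $\rho^{(i_0)}=(1,1,-2)$, one firing of $i_0$ gives $(-1,-1,2)$, the unique configuration at distance $1$; firing vertex $1$ there (amplitude exactly $-1$, hence in your range $[-1,0)$) gives $(1,-2,1)$, which is at distance $2$ --- the distance goes \emph{up}, not down. Meanwhile the other half of your lemma, that firing amplitude $<-1$ always increases $\ell$ by exactly $1$, is essentially the optimality assertion of the theorem itself, so it cannot be obtained by ``root-system bookkeeping in $W_0$''; since both your termination argument and your optimality argument rest on this lemma, the core of the proof is missing.

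The paper proceeds differently, avoiding length functions at this stage: (a) a strong-convergence/diamond lemma for the restricted game (Lemma \ref{termination}), proved by alternately firing two vertices of amplitude $<-1$ and checking that all intermediate amplitudes stay $<-1$; (b) a classification (Lemma \ref{grholem}) of the regular configurations with $\delta\cdot\sigma=0$ and all amplitudes $\geq-1$ as exactly the $-\rho^{(i)}$; (c) termination via finiteness of the orbit together with a path-reversal trick; and (d) an induction on the length of a shortest path to some $-\rho^{(i')}$ for the optimality claim. Your identification of the terminal states and your time-reversal argument for the involution do match (b) and the paper's final paragraph, but steps (a), (c), (d) cannot be replaced by your length lemma. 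If you want to salvage the length picture, the right device is the paper's Proposition \ref{strloopcmpp}: perturb to $u_\kappa=\rho^{(i_0)}+\kappa\omega^{(i_0)}$ with $\kappa$ slightly greater than $1$, which breaks the degeneracy in $n$ --- but note that even there one direction of the equivalence is \emph{deduced from} Theorem \ref{stratthm} rather than used to prove it.
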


Moreover, the \emph{score vector} is the same regardless of the moves chosen: this means the configuration vector $s$ on $\Gamma$ which records, at each vertex $i$ of $\Gamma$, the sum of negative all the amplitudes fired in the course of playing the game.

To prove the theorem, we will use two lemmas:

\begin{lemma} \label{grholem}
Let $\sigma \in \Z^I$ have the following properties:
\begin{enumerate}
 \item[(i)] $\sigma$ is in the boundary of the Tits cone, \emph{i.e.}, $\sigma \cdot \delta =0$.
\item[(ii)] For each $i \in I$,  $\sigma_i \geq -1$.
\item[(iii)] For every root $\alpha \in \Delta$,  $\sigma \cdot \alpha \neq 0$.
\end{enumerate}
Then $\sigma = -\rho^{(i)}$ for some extending vertex $i$.
\end{lemma}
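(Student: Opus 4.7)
The plan is to reduce the lemma to an equivalent statement about the restriction $\tau := \sigma|_{I_0}$ to the finite sub-diagram.

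Using $\delta = \alpha^{(i_0)} + \theta$ where $\theta$ is the highest root of $\Gamma_0$, condition (i) determines $\sigma$ from $\tau$ via $\sigma_{i_0} = -\tau \cdot \theta$. Condition (ii) becomes $\tau_j \geq -1$ for all $j \in I_0$ together with $\tau \cdot \theta \leq 1$. Any affine real root has the form $\gamma + n\delta$ for some $\gamma \in \Delta^0$, and since $\sigma \cdot \delta = 0$ one computes $\sigma \cdot (\gamma + n\delta) = \tau \cdot \gamma$. Thus condition (iii) is equivalent to the requirement that $\tau$ be $W_0$-regular: $\tau \cdot \gamma \neq 0$ for every $\gamma \in \Delta^0$. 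Applied to simple finite roots, this forces $\tau_j \neq 0$; combined with (ii) and integrality, $\tau_j \in \{-1\} \cup \{1, 2, 3, \ldots\}$ for each $j \in I_0$.

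Set $T' := \{j \in I_0 : \tau_j \geq 1\}$. If $T' = \emptyset$, then $\tau = -\rho$ and $\sigma = -\rho^{(i_0)}$, giving the conclusion with $i = i_0$. Otherwise, the goal is to show that $T' = \{i\}$ for some extending vertex $i \in I_0$ and that $\tau = h\omega_i - \rho$, giving $\sigma = -\rho^{(i)}$. Assume first that $|T'| = 1$, say $T' = \{i\}$, so that $\tau = -\rho + (\tau_i + 1)\omega_i$ with $\tau_i \geq 1$. A direct computation yields $\tau \cdot \theta = \delta_i(\tau_i + 1) - (h - 1)$, so $\tau \cdot \theta \leq 1$ becomes $\delta_i(\tau_i + 1) \leq h$. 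For every positive finite root $\beta = \sum n_k \alpha_k$, regularity demands $(\tau_i + 1)\, n_i \neq \text{ht}(\beta)$. The crucial input is the classical fact that for simply-laced finite root systems (to which one reduces by the folding remark at the start of this section), the heights of positive roots with $n_i = 1$ fill every integer from $1$ to some maximum $M_i$; one has $M_i = h - 1$ precisely when $i$ is extending (in that case $\theta$ itself has $n_i = 1$), while for non-extending $i$ one has $M_i \geq h/\delta_i$. In the extending case, the combined constraints $\tau_i + 1 \leq h$ and $\tau_i + 1 \notin [1, M_i] = [1, h-1]$ force $\tau_i + 1 = h$, and hence $\sigma = -\rho^{(i)}$. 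In the non-extending case, the bound $\tau_i + 1 \leq h/\delta_i \leq M_i$ ensures that $\tau_i + 1$ coincides with the height of some positive root with $n_i = 1$, violating regularity.

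Finally, the case $|T'| \geq 2$ is excluded by a similar regularity argument. The constraints $\sum_{j \in T'} \delta_j \tau_j - \sum_{j \notin T'} \delta_j \leq 1$ combined with $\tau_j \geq 1$ on $T'$ force $\sum_{j \in T'} \delta_j \leq h/2$, so $T'$ is forced to be ``small''. Picking $j_1, j_2 \in T'$ and taking a connected sub-diagram $J \subset I_0$ containing them (which in the simply-laced case automatically yields a positive root $\sum_{k \in J} \alpha_k$), one produces a witness $\beta$ with $\tau \cdot \beta = 0$ by balancing the contributions from $T'$ and $S' := I_0 \setminus T'$ via a careful choice of the path between $j_1, j_2$ and possible extension into $S'$ or further $T'$ elements. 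The main obstacle is precisely this last combinatorial step, demonstrating uniformly across all types that such a witness root exists whenever $|T'| \geq 2$; this likely involves induction on $|T'|$ or an appeal to the affine-alcove interpretation of $\tau + \rho$ as a dominant affine weight at level $h$, where the conditions $\mu_j \neq 1$ pin down the vertices of the scaled fundamental alcove corresponding to extending coweights.
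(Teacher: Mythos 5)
Your reduction to the finite diagram (replacing $\sigma$ by $\tau=\sigma|_{I_0}$, and conditions (i)--(iii) by $\tau_j\geq -1$, $\tau\cdot\beta_{\mathrm{long}}\leq 1$, and $W_0$-regularity) is correct, and the cases $T'=\emptyset$ and $|T'|=1$ are workable in outline. But the proof has a genuine gap at the case $|T'|\geq 2$: you never actually produce the witness root $\beta$ with $\tau\cdot\beta=0$, you only describe a hoped-for ``balancing'' along a path and explicitly flag it as the main unresolved obstacle. This is not a routine verification --- it is the heart of the lemma (it is exactly the assertion that a regular integral point of the region $\{\tau_j\geq -1,\ \tau\cdot\beta_{\mathrm{long}}\leq 1\}$ must be $-\rho^{(i)}$ for an extending vertex $i$), and a uniform, type-free argument for it is precisely what is needed. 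In addition, the $|T'|=1$ case leans on two unproved assertions: that the heights of positive roots $\beta$ with $n_i(\beta)=1$ fill the whole interval $[1,M_i]$, and that $M_i\geq h/\delta_i$ when $i$ is not extending. The first is true but requires an argument (connectivity of the height-graded set of such roots); the second is nonstandard and would have to be checked case by case, which undercuts the uniformity you are aiming for.

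For contrast, the paper's proof avoids the case analysis on the set of positive coordinates entirely. It works with $R=\{\alpha^{(i)}\}_{i\in I_0}\cup\{-\beta_{\mathrm{long}}\}$, on which (i) and (ii) give $\sigma\cdot\gamma\geq -1$; it writes the set of finite roots pairing negatively with $\sigma$ as $-w\Delta^0_+$; it observes that any such root other than one of the $-w\alpha^{(i)}$ is a nonnegative integral combination $-\sum b_i\,w\alpha^{(i)}$ with $\sum b_i\geq 2$ and hence pairs with $\sigma$ to at most $-2$, so it cannot lie in $R$; and it then uses the positive linear dependence among the $|I_0|+1$ elements of $R$ to force $R\supseteq\{-w\alpha^{(i)}\}_{i\in I_0}$. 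That immediately makes $|I_0|$ of the $|I_0|+1$ coordinates of $\sigma$ equal to $-1$, with no need to classify which coordinates could be positive. If you want to complete your approach, the cleanest repair is exactly this dependence argument; as written, the $|T'|\geq 2$ case remains open.
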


Note in particular that, for any $g \in W_0$, the vector $\widetilde{g \rho}$ satisfies~(i) and~(iii).

\begin{proof}
  Recall that $\Delta^0$ denotes the root system of $W_0$. The simple
  roots of $\Delta$ are the simple roots $\alpha^{(i)} \in
  \Delta_{I_0}$ for $i \in I_0$, together with $\alpha^{(i_0)} =
  \delta - \beta_{\mathrm{long}}$, where $\beta_{\mathrm{long}}$ is
  the longest root in $\Delta^0$.  Using (i) and (ii), we deduce that
  $\sigma \cdot \gamma \geq -1$ for $\gamma$ in $R := \{ \alpha^{(i)}
  \}_{i \in I_0} \cup \{ - \beta_{\mathrm{long}} \}$.

Taking inner product with $\sigma$ yields a linear function on $\Delta^0$, which, by condition~(iii), is not zero on any root.  Let $\mathcal{N}$ be the subset of $\Delta^0$ whose inner product with $\sigma$ is negative; so  $\mathcal{N} = - w \Delta^0_+$ for some $w \in W_0$.  For any $\gamma \in \mathcal{N}$,  $\gamma = - \sum_{i \in I_0} b_i (w \alpha^{(i)})$ for some nonnegative integers $b_i$, and then $\sigma \cdot \gamma \leq - \sum b_i$.  If $\gamma$ is an element of $\mathcal{N}$ which is not of the form $- w \alpha^{(i)}$ then we deduce that $\sigma \cdot \gamma < -1$, and thus $\gamma \notin R$.

Combining the observations of the last two paragraphs, we see that $R \subseteq w \Delta^0_{+} \cup \{ - w \alpha^{(i)} \}_{i \in I_0}$. Suppose, for the sake of contradiction, that $-w \alpha^{(j)} \not \in R$ for some $j \in I_0$.  Then $w^{-1} R$ lies in the closed half-space whose boundary is spanned by the simple roots other than $\alpha^{(j)}$.  But there is a positive linear dependence between the elements of $R$, which is a contradiction.

We deduce that $R \supset  \{ - w\alpha^{(i)} \}_{i \in I_0}$. So $\sigma \cdot \beta$ is $-1$ for all but one element $\beta \in R$. Viewing $\sigma$ as an element of $\R^I$, this says that all but one coordinate is $-1$. Let that one coordinate be $i$. Consider the simple roots of $W$ associated to $I \setminus \{ i \}$; modulo $\delta$, these roots form a simple root system for $W_0$ (namely, $ \{-w \alpha^{(i)} \}_{i \in I_0}$). So, $i$ is an extending vertex and $\sigma = -\rho^{(i)}$.
\end{proof}

\begin{lemma} \label{termination} Beginning with any configuration $v$, suppose that is possible to fire $r$ vertices \emph{(}counted with multiplicity\emph{)} of amplitude $< -1$ until there are none left. Consider any other sequence of firing $s$ vertices of amplitude $< -1$.  Then $s \leq r$ and this sequence can be extended to a sequence of $r$ firings of vertices of amplitude $< -1$ which terminates at the same configuration.
\end{lemma}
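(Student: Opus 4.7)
The plan is to adapt Eriksson's strong-convergence argument for the classical numbers game (\cite{erikconf}, cf.\ \cite[\S4.3]{BB}) to the modified firing rule in which only vertices of amplitude $<-1$ may be fired. The main new input is a \emph{persistence} lemma: if $v_j<-1$ and we fire $i\neq j$, the new amplitude at $j$ is still $<-1$. This holds because $v_j$ changes by $-c_{ij}v_i$, which is zero when $i,j$ are non-adjacent and $\leq 0$ when they are adjacent (since $c_{ij}\leq 0$ and $v_i<0$), so $v_j$ weakly decreases.

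From persistence I derive the \emph{exchange} step: if $v$ admits a terminating firing sequence of length $r$ and $i$ is any vertex with $v_i<-1$, then some terminating firing sequence of length $r$ starts by firing $i$. By persistence, $i$ remains fireable throughout the given sequence until it is actually fired, and it must eventually be fired (otherwise it would still be fireable at the terminal configuration). Moving this firing to the front one position at a time, via the usual local rewrites---non-adjacent fireable firings commute as linear maps, while adjacent ones are rearranged using the Coxeter braid relation $(s_is_j)^{n_{ij}}=1$---produces a terminating sequence of the same length that begins with firing $i$. Persistence ensures at each step of the rewritten sequence that the vertex about to be fired still has amplitude $<-1$.

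The lemma then follows by induction on $r$: given a partial firing sequence of length $s$ starting at $v$, apply the exchange step to realign the terminating sequence of length $r$ to start with the same first move; strip the common first move; and invoke the inductive hypothesis on the resulting pair, whose terminating continuation has length $r-1$. Reattaching the common first move yields $s\leq r$ and extends the partial sequence to a terminating sequence of length $r$ reaching the same configuration. The main obstacle is the braid-relation step inside the exchange argument: classically it requires each intermediate amplitude fired to be negative, whereas the modified game requires the stronger bound $<-1$. Persistence supplies exactly this bound, since every vertex appearing in the shuffled sequence was already fired in the original sequence and therefore had amplitude $<-1$ at that moment; the remaining linear-algebraic content of the braid relation is identical to the classical setting.
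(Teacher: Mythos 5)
Your persistence observation is exactly the one the paper's proof starts from, and your overall induction on $r$ matches the paper's structure. But the heart of your argument --- the ``exchange step'' --- has a genuine gap, in two places.

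First, the mechanism. To move the first occurrence of $i$ to the front you propose commutations for non-adjacent letters and braid relations for adjacent ones. A braid relation can only be applied to a consecutive alternating block $f_jf_if_j\cdots$ of length exactly $n_{ij}$, and a general firing sequence contains no such block next to the letter you want to move; you are implicitly invoking the word property for Coxeter words, whose applicability here (that there even exists a valid length-$r$ terminating sequence of the \emph{modified} game beginning with $i$) is precisely what must be proved. The standard and correct route, which the paper takes, is local confluence: from a configuration where both $v_i,v_j<-1$, alternately fire $i$ and $j$; both orders converge to the same configuration $\pi$ after $n_{ij}$ steps, and the induction is run through $\pi$.

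Second, and more fatally, your justification that the rewritten sequence still fires only amplitudes $<-1$ is wrong. You write that ``every vertex appearing in the shuffled sequence was already fired in the original sequence and therefore had amplitude $<-1$ at that moment,'' but after a braid move the intermediate configurations are \emph{different} from the original ones, so the amplitudes actually fired are different numbers; persistence says nothing about them. The needed fact is that in the rank-two alternating sequence every fired amplitude equals $\alpha\cdot(v_i,v_j)$ for a positive root $\alpha$ of the rank-two subsystem, which is $<-1$ because $\alpha$ has nonnegative \emph{integral} coordinates. Integrality of the Cartan matrix is essential: Remark \ref{nonintrem} of the paper exhibits a rank-two example with Cartan matrix $\left(\begin{smallmatrix}2&-2\\-\frac12&2\end{smallmatrix}\right)$ and $v=(-2,-2)$ where persistence holds, yet the two firing orders have different lengths and do not reconverge. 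Since your proof never uses integrality, it would ``prove'' the lemma in that case too, so it cannot be complete as written.
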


\begin{cor} Beginning with any configuration $v$, suppose there are two ways of firing vertices of amplitude $< -1$ until there are none left.  Then these two paths are of the same length, and terminate at the same configuration.
\end{cor}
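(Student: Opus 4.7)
The plan is to derive the corollary as an immediate application of Lemma \ref{termination}. Let the two terminating firing sequences be $A$ (of length $r$) and $B$ (of length $s$), each consisting of firings of vertices with amplitude $<-1$, and each ending at a configuration where no vertex has amplitude $<-1$.

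First I would apply Lemma \ref{termination} taking $A$ as the hypothesized terminating sequence of length $r$ and $B$ as the ``other sequence'' of length $s$. The lemma immediately gives $s \le r$, and tells us that $B$ can be extended to a sequence of $r$ firings (still of amplitude $<-1$ vertices) which terminates at the same configuration as $A$.

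The key observation is then that $B$ itself already terminates: there is no vertex of amplitude $<-1$ left at the end of $B$, so no further legal firing exists. Therefore the guaranteed extension of $B$ must have length $0$, forcing $s = r$ and identifying the end configuration of $B$ with that of $A$. Reversing the roles of $A$ and $B$ (or simply noting symmetry of the conclusion) gives the full statement. There is no real obstacle here; the content is entirely packaged into the previous lemma, and the corollary is just the observation that a terminating sequence cannot be properly extended under the stated firing rule.
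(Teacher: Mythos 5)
Your proof is correct and is exactly the deduction the paper intends: the corollary is stated as an immediate consequence of Lemma \ref{termination}, and your observation that a sequence which already terminates admits no proper extension, forcing $s=r$ and equality of the final configurations, is precisely the argument.
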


\begin{proof}[Proof of Lemma~\ref{termination}] This is very similar to the proof of strong convergence in the numbers game, so we will be brief.  Our proof is by induction on $r$.  Notice that, if $v_i < -1$ and $v_j < -1$ then the $j$ coordinate will still be $< -1$ after the $i$-vertex is fired.  Thus, if $r=1$ then only one coordinate of $v$ is $< -1$ and the claim is obvious.

  For larger $r$, let $v \to_i \sigma \to \cdots \to \zeta$ be our
  sequence of length $r$, beginning by firing $i$, and let $v \to_j
  \tau \to \cdots \to \zeta'$ be the sequence of length $s$, beginning
  by firing $j$.  Then $v_i$ and $v_j$ are both $< -1$.  Consider
  alternately firing $i$ and $j$ until both coordinates are $\geq
  -1$. (If this never occurs, then no path from $v$ can terminate.)
  Let $\pi$ be the configuration when both coordinates become $\geq
  -1$. We claim that this configuration does not depend on whether we
  fire $i$ or $j$ first and the two paths $v \to \sigma \to \cdots \to
  \pi$ and $v \to \tau \to \cdots \to \pi$ have the same length $t =
  n_{ij}$.  In fact, $(\pi_i, \pi_j) = (-v_i, -v_j)$ if $n_{ij}$ is
  even, and $(\pi_i, \pi_j) = (-v_j, -v_i)$ if $n_{ij}$ is odd.  It is
  clear that applying $n_{ij}$ firings, alternating between firing
  vertex $i$ and $j$, produces the desired result, regardless of which
  reflection is applied first: we only have to prove that, along the
  way, only vertices of amplitude $< -1$ are fired.  This follows
  because, by usual strong convergence of the numbers game, only
  negative-amplitude vertices are fired; these amplitudes must be of
  the form $\alpha \cdot (v_i, v_j)$ where $\alpha$ is a positive root
  for the restriction of the diagram to vertices $i$ and
  $j$. Therefore, $\alpha$ is a vector with nonpositive integral
  entries, implying that the dot product is indeed $< -1$.

By induction, we can extend $\sigma \to \cdots \to \pi$ to a path $\sigma \to \cdots \to \pi \to \cdots \to \zeta$ of length $r-1$. Tacking the second part of this path onto the path $\tau \to \cdots \to \pi$, we obtain a path $\tau \to \cdots \to \zeta$ of length $r-1$. Using induction again, we can complete $\tau \to \cdots \to \zeta'$ to a path $\tau \to \cdots \to \zeta' \to \cdots \to \zeta$. Tacking $v \to_j \tau$ on the beginning of this path, we are done.
\end{proof}

Note that there is nothing in the above lemma that requires the number $-1$: the same is true for any negative number, and neither the number nor $v$ need be integral.

\begin{proof}[Proof of Theorem \ref{stratthm}]

We begin by showing that there is a path from any $\rho^{(i)}$ to some $- \rho^{(j)}$ by firing only vertices of amplitude $< -1$. Start at $\rho^{(i)}$ and fire vertices of amplitude $< -1$ in any manner.  Since the $W$ orbit of $\rho^{(i)}$ is finite, either we will reach a configuration with no vertices of amplitude $< -1$, or we will repeat a configuration. In the former case, by Lemma \ref{grholem}, we are done.  In the latter case, we have a path of the form $\rho^{(i)} \to \cdots \to \sigma \to \cdots \to \sigma$. Reversing this path and negating all the configurations, we obtain a path $- \sigma \to \cdots \to - \sigma \to \cdots \to - \rho^{(i)}$ which only fires vertices of amplitude $< -1$.  So we have two paths of different lengths from $-\sigma$ to $-\rho^{(i)}$, contradicting Lemma \ref{termination}. Note that we now have enough to prove Theorem \ref{strongloopthm}.

We continue with the proof of Theorem \ref{stratthm}. Let $\sigma$ be of the form $\widetilde{g \rho}$.  Because the numbers game for $W_0$ terminates, it is possible to get from $\sigma$ to $\rho^{(i_0)}$ by firing only vertices of negative amplitude.  By the result we just established, it is possible to get from $\sigma$ to some $- \rho^{(i)}$ by firing only vertices of negative amplitude.  Let the length of a shortest path from $\sigma$ to some $-\rho^{(i)}$ be $\ell$; we must show that this path involves only firing vertices of amplitude $< -1$. Our proof is by induction on $\ell$; the base case $\ell=0$ is obvious. If $\sigma$ is of the form $-\rho^{(i')}$, we are clearly done. If not then, by Lemma \ref{grholem},  $\sigma_j < -1$ for some $j$.

Let the first step of our path go from $\sigma \to \sigma'$, firing $k$.  The rest of the path, from $\sigma'$ to $-\rho^{(i)}$, must be a shortest path from $\sigma'$ to any $-\rho^{(i')}$ and hence, by induction, must only involve firing vertices of amplitude $< -1$. Let $\sigma_k = a$. We want to show that $a < -1$.

Starting at $\sigma$, alternately fire $j$ and $k$ until the $j$ and $k$ coordinates are both positive. The resulting configuration, $\tau$, is the same whether we fire $j$ or $k$ first, and the length of the resulting path from $\sigma$ to $\tau$ is $n_{jk}$ in either case. On the route from $\sigma'$ to $\tau$, all the vertices fired are of amplitude $< -1$. Using Lemma \ref{termination}, we can fire $\ell - n_{jk}$ more vertices of amplitude $< -1$ to get from $\tau$ to $-\rho^{(i)}$. Now, consider the path $\sigma \to \cdots \to \tau' \to \tau \to \cdots \to (-\rho^{(i)})$ which starts by firing $j$. Then the vertex which is fired when going from $\tau'$ to $\tau$ has amplitude $a$. The path from $\tau'$ to $(-\rho^{(i)})$ must be shortest possible (or there is a shorter path from $\sigma$ to $-\rho^{(i)}$). So, by induction, every vertex which is fired in this path has amplitude $< -1$. In particular, $a < -1$ and the original path fires only vertices of amplitude $< -1$.

It remains only to show that, for $\sigma=\rho^{(i_0)}$, the resulting map $\iota: i_0 \mapsto i$ on extending vertices of $\widetilde \Gamma$ is an involution.  For this, note that if a firing sequence of vertices of amplitude $< -1$ takes $\rho^{(i_0)}$ to $-\rho^{(i)}$, then the sequence in reverse takes $\rho^{(i)}$ to $-\rho^{(i_0)}$, also by vertices of amplitude $< -1$.
\end{proof}

\begin{rem} One may easily compute the involution $\iota$ appearing above:

\begin{prop}\label{ioprop}
The involution $\iota$  is   trivial for exactly the graphs
\begin{equation}\label{iotriveq}
\widetilde{A_{2m}}, \widetilde{B_{4m-1}}, \widetilde{B_{4m}}, \widetilde{D_{4m}}, \widetilde{D_{4m+1}}, \widetilde{E_6}, \widetilde{E_8}, \widetilde{F_4}, \widetilde{G_2}, \quad m \geq 1.
\end{equation}
For graphs $\widetilde{A_{2m-1}}$, $\iota$ is the involution sending every vertex in $\Gamma$ to its antipodal vertex.  For type $\widetilde{D_{4m+2}}, \widetilde{D_{4m+3}}$, $\iota$ interchanges exterior vertices which are adjacent to a common internal vertex, and is the identity on interior vertices.  For types $\widetilde{B_{4m+1}}, \widetilde{B_{4m+2}}, \widetilde{C_{m+1}}, \widetilde{E_7}$, $\iota$ is the unique nontrivial automorphism of the graph.
\end{prop}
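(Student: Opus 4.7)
The plan is to reduce by equivariance to a single element of the fundamental group $\Omega$ and then distinguish the remaining candidates by direct computation.

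Since for every $\phi \in \Aut(\widetilde\Gamma)$ one has $\phi \cdot \rho^{(i)} = \rho^{(\phi(i))}$ and firing is $\phi$-equivariant, $\iota$ commutes with the entire diagram automorphism group. Let $\Omega \subseteq \Aut(\widetilde\Gamma)$ be the subgroup acting simply transitively on the extending vertices. Identifying extending vertices with $\Omega$ via $\phi \mapsto \phi(i_0)$, the map $\iota$ becomes left-multiplication by a single $\omega_\iota \in \Omega$ of order dividing $2$. This already gives $\omega_\iota = 1$ in every type where $\Omega$ has no nontrivial involution: $\widetilde{E_8}$, $\widetilde{F_4}$, $\widetilde{G_2}$ (where $|\Omega|=1$), $\widetilde{E_6}$ ($\Omega = \Z/3$), and $\widetilde{A_{2m}}$ ($\Omega = \Z/(2m+1)$).

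In the remaining types $\Omega$ has even order. When $\Omega$ is cyclic it has a unique order-$2$ element, leaving two candidates for $\omega_\iota$. For $\widetilde{D_n}$ with $n$ even, $\Omega = (\Z/2)^2$ has three nontrivial involutions; here equivariance under a single-end leaf-swap $\sigma \in \Aut(\widetilde\Gamma) \setminus \Omega$ (which fixes one pair of leaves and swaps the other) rules out two of them, again leaving two candidates. Thus in every remaining type, $\omega_\iota$ is either the identity or the specific nontrivial involution $\omega_*$ named in the proposition.

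To decide $\omega_\iota \in \{1, \omega_*\}$, one computes the firing sequence from $\rho^{(i_0)}$ directly in a small base case of each family; by the corollary to Lemma \ref{termination}, the endpoint does not depend on how the firings are ordered. Checking $\widetilde{A_1}$ and $\widetilde{A_3}$ confirms the antipodal involution for odd-rank type $A$; $\widetilde{C_2}$ and $\widetilde{E_7}$ are each a single-case check; and the mod-$4$ dichotomy in $\widetilde{B_n}$ and $\widetilde{D_n}$ is verified at $n = 3, 4, 5, 6$ (with $n = 7$ as a consistency check). The infinite families are then dispatched by induction on the rank: inserting a ``plug'' of length $2$ (for type $\widetilde A$) or $4$ (for types $\widetilde B, \widetilde D$) into the interior of the diagram modifies the firing sequence in a way whose net effect on $\omega_\iota$ is trivial, so the answer is constant on each residue class of the stated period.

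I expect the main obstacle to be verifying this inductive step, particularly the mod-$4$ period in types $\widetilde B, \widetilde D$. This can be organized via a direct combinatorial analysis of how the firing ``wave'' propagates along the inserted chain (using strong convergence to reorganize the firings into blocks whose effect is easy to track), or alternatively by a parity check on the total firing length, which by the length formula of Theorem \ref{stratthm} is determined by the Coxeter exponents of $W_0$ and leads via \eqref{curid} to the correct dichotomy.
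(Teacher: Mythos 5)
Your opening reduction is correct and is genuinely different from the paper's route: since firing commutes with diagram automorphisms and $\phi\cdot\rho^{(i)}=\rho^{(\phi(i))}$, the map $\iota$ commutes with $\Aut(\Gamma)$, and simple transitivity of the fundamental group $\Omega\cong P^\vee_{I_0}/Q^\vee_{I_0}$ on the extending vertices forces $\iota$ to be multiplication by a single $\omega_\iota\in\Omega$ with $\omega_\iota^2=1$; centralizing a single-end leaf swap does correctly cut the three involutions of $(\Z/2)^2$ in $\widetilde{D}_{2k}$ down to the end-preserving one. This disposes of $\widetilde{A_{2m}},\widetilde{E_6},\widetilde{E_8},\widetilde{F_4},\widetilde{G_2}$ outright and leaves exactly one binary question per remaining diagram. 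For comparison, the paper answers that question uniformly: combining Proposition \ref{strloopcmpp} with Lemma \ref{trlem}, it identifies $\iota$ with $\prod_{i\in I_0}\gamma_i$, i.e.\ with the image of $\sum_{i\in I_0}\omega^{(i)}$ (the coweight $\rho^\vee$) under the monomorphism $P^\vee_{I_0}/Q^\vee_{I_0}\into\Aut(\Gamma)$, so the whole proposition reduces to the classical computation of $\rho^\vee$ modulo the coroot lattice, which is what produces the mod~$2$ and mod~$4$ periodicities in closed form for all $n$ at once.

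The gap is in how you resolve the binary question for the infinite families. The ``plug insertion'' induction is only asserted: you would need to prove that lengthening the chain by $2$ or $4$ does not change whether $\rho^{(i_0)}$ flows to $-\rho^{(i_0)}$ or to $-\rho^{(\omega_*(i_0))}$, and since the firing sequence has length cubic in $n$ and the wave repeatedly interacts with both ends of the diagram, nothing in Theorem \ref{stratthm} or Lemma \ref{termination} localizes the computation to the inserted segment; no local argument is evident, and this step is where essentially all the content of the proposition lives. The proposed fallback is also insufficient: each firing acts on the restriction to $\Gamma_0$ as a reflection of $W_0$, so the parity of the total number of moves computes only $\det\bigl(w^{\text{top}}\bigr)$, hence only the sign character of the element $r^{(\iota(i_0))}\in W_0$ realizing the candidate automorphism; a single sign cannot in general distinguish $\omega_\iota=1$ from $\omega_\iota=\omega_*$, and you have not shown that it does in the relevant types, nor how \eqref{curid} (an evaluation at $t=1$) would recover a parity. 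Finally, note that $\widetilde{C_{n}}$ is itself an infinite family, not ``a single-case check'': verifying $\widetilde{C_2}$ says nothing about $\widetilde{C_n}$ for larger $n$ without the same (unproven) induction. As it stands the argument establishes the proposition only for the types killed by the symmetry reduction and for the finitely many base cases actually computed.
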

We will explain the proof following Lemma \ref{trlem} in the next section.
\end{rem}

\begin{rem}\label{nonintrem}
  The results of this section generalize to the case when the Cartan
  matrix $C$ is not necessarily integral, but satisfies $c_{ij} =
  c_{ji}$ whenever $n_{ij}$ is odd.  This is because, in this case,
  every positive root $\alpha$ is obtained from a simple root by
  simple reflections that only increase coordinates in the basis of
  simple roots.  Hence, for every positive root $\beta$ and every
  configuration $v$ such that $v \in \R_{\leq -1}^I$, we must have $\beta
  \cdot v \leq -1$, with equality holding only if $\beta = \alpha^{(j)}$
  is a simple root such that $v_j = -1$.  Then, all of the statements
  of results above remain unchanged, except that, in Lemma
  \ref{grholem}, we should let $\sigma$ be in the Weyl orbit of a
  vector $\tau \in \R^I$ such that $\tau_i \geq 1$ for all $i \in
  I_0$, leaving the conditions (i)--(iii) unchanged.  The proofs are
  only changed to replace the integrality by the above fact.

Note that the results do not hold in general without the assumption that $c_{ij} = c_{ji}$ for odd $n_{ij}$: for instance, a counterexample to Lemma \ref{termination} is obtained if we consider the Dynkin graph $\Gamma_0 = A_2$ with Cartan matrix $\begin{pmatrix} 2 & -2 \\ -\frac{1}{2} & 2 \end{pmatrix}$ and the vector $v = (-2, -2)$, then one firing sequence yields $(-2, -2) \rightarrow (-3, 2) \rightarrow (3, -4) \rightarrow (1, 4)$, firing only vertices of amplitude $< -1$, whereas the other firing sequence yields $(-2, -2) \rightarrow (2, -6) \rightarrow (-1, 6)$, and then we would have to fire a vertex of amplitude $\geq -1$ to obtain the same configuration as in the previous firing sequence. As a result, the configuration $(-2, -2, 3\sqrt{2})$ on the diagram $\Gamma = \widetilde{A_2}$ with Cartan matrix $\begin{pmatrix} 2 & -2 & -1 \\ -\frac{1}{2} & 2 & -1 \\ -1 & -1 & 2\end{pmatrix}$ contradicts Theorem \ref{stratthm}.  Moreover, the vector $(-1, 6, 2 \sqrt{2})$ contradicts Lemma \ref{grholem}, as generalized in the previous paragraph.
\end{rem}

\section{The resulting poset and geometric interpretations}\label{posetsec}

In this section, in Proposition \ref{strloopcmpp}, we relate playing
the numbers game on the boundary of the Tits cone, where we fire only
vertices of amplitude $< -1$ beginning from $\rho^{(i_0)}$, with
playing the numbers game slightly inside the Tits cone where we fire
any vertex with negative amplitudes beginning with configurations of
the form $\rho^{(i_0)} + \kappa \omega^{(i_0)}$, for appropriate
values of $\kappa >0$.  Recall that $\omega^{(i)}$ is the
$i$-th fundamental coweight, which is also the $i$-th basis vector of
configuration space $\R^I$.
 
We then study the posets $P_i$ of configurations obtainable from $\rho^{(i)}$ by firing vertices of amplitude $< -1$, and see how these give rise to a canonical decomposition of the finite Weyl group $W_0$ into isomorphic graded selfdual posets, $W_0^{(i)}$, one for each extending vertex, whose maximal degree is cubic in the number of vertices (unlike the weak order poset, which has quadratic degree in the number of vertices). Then, using Proposition \ref{strloopcmpp}, we show how the poset $W_0^{(i_0)}$ may be identified with an interval under the left weak order in the affine Weyl group.

Finally, we also show that the graph of this poset coincides with the dual of the triangulation of the unit hypercube in the reflection representation of the affine Weyl group. This triangulation has been studied in many places, notably recently in \cite{LamPost} in type $A$.

\subsection{Relation to strong convergence, and number of moves required}

Here, we compare firing only amplitudes $< -1$ beginning from $\rho^{(i_0)}$ with firing \emph{any} negative amplitudes beginning from the configuration $u_\kappa := \rho^{(i_0)} + \kappa \omega^{(i_0)}$, for certain positive values of $\kappa$. Using this, strong convergence of the numbers game for $u_\kappa$ will explain why $\rho^{(i_0)}$ can reach $-\rho^{(i)}$ and why the strategy of (arbitrarily) firing vertices of amplitude $< -1$ is the fastest way to do so.  This will also allow us to easily compute the number of moves required.

Let $\beta_{\mathrm{long}} \in (\Delta_{\Gamma_0})_+$ be the longest root of $(\Delta_{\Gamma_0})_+$.

\begin{prop} \label{strloopcmpp} If $\kappa \in (1, 1 + \frac{1}{l(\beta_{\mathrm{long}})-1})$, then a sequence of vertices $i_1, i_2, \ldots$ is a valid firing sequence for the numbers game beginning with $u_\kappa$ if and only if it is a firing sequence of amplitudes $< -1$ beginning with $\rho^{(i_0)}$.  Moreover, the final configurations reached under a maximal such sequence are $(\kappa - 1 )\rho^{(\iota(i_0))} + \kappa \omega^{(\iota(i_0))}$ and $-\rho^{(\iota(i_0))}$, respectively, for some fixed involution $\iota \in \Aut(\Gamma)$.
\end{prop}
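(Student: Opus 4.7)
The plan is to run both games in lockstep under a common firing sequence, compare amplitudes via a linear identity, and use the bound on $\kappa$ to force the firing rules (amplitude $<-1$ for game 1 vs.\ amplitude $<0$ for game 2) to select the same vertices at every step; the terminal configurations then follow from Theorem \ref{stratthm}. Specifically, applying the same reflections $s_{j_1},\dots,s_{j_k}$ to both $\rho^{(i_0)}$ and $u_\kappa = \rho^{(i_0)}+\kappa\omega^{(i_0)}$ yields configurations whose difference is $\kappa\, w^{(k)}\omega^{(i_0)}$, where $w^{(k)} = s_{j_k}\cdots s_{j_1}$. Using the adjointness $\langle w v,\alpha\rangle = \langle v,w^{-1}\alpha\rangle$ together with the fact that $\langle \omega^{(i_0)},\beta\rangle$ equals the coefficient of $\alpha^{(i_0)}$ in $\beta$, the game 2 amplitude at vertex $j$ exceeds the game 1 amplitude by $\kappa n$, where $n$ is the $\alpha^{(i_0)}$-coefficient of the real root $\gamma := (w^{(k)})^{-1}\alpha^{(j)}$. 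Writing $\gamma = \alpha + n\delta$ with $\alpha$ supported on $I_0$, the relations $\delta\cdot\rho^{(i_0)}=0$ and $(\rho^{(i_0)})_i = 1$ for $i\in I_0$ give the game 1 amplitude as the integer $\mathrm{ht}_{\Gamma_0}(\alpha)$, and hence the game 2 amplitude as $\mathrm{ht}_{\Gamma_0}(\alpha) + \kappa n$.

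I then induct on $k$, maintaining the invariant that the two games have agreed on all prior firings and that at the current configuration every vertex has $n\in\{0,1\}$. Under this invariant the firing rules coincide at the next step: if $n=0$, then $\gamma$ has zero $\delta$-component so $\alpha\in\Delta^0_+$ (using that $\gamma$ must be a positive real root coming from a reduced word), giving $\mathrm{ht}(\alpha)\geq 1$ and neither game fires; if $n=1$ with $\alpha\in\Delta^0_+$, both amplitudes are positive; and if $n=1$ with $\alpha\in\Delta^0_-$, integrality of $\mathrm{ht}(\alpha)$ combined with $1<\kappa<2$ gives $\mathrm{ht}(\alpha)+\kappa < 0 \iff \mathrm{ht}(\alpha)\leq -2$, matching the game 1 rule exactly. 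The base case $k=0$ is immediate since $\alpha^{(i_0)} = \delta - \beta_{\mathrm{long}}$ has $n=1$ (with $\alpha = -\beta_{\mathrm{long}}\in\Delta^0_-$) while the other simple roots have $n=0$.

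The main obstacle is preserving the invariant after each matched firing. Reflecting by $s_{j_{k+1}}$ transforms $(w^{(k)})^{-1}\alpha^{(j)}$ into $(w^{(k)})^{-1}\alpha^{(j)} - c_{j_{k+1},j}\,(w^{(k)})^{-1}\alpha^{(j_{k+1})}$, which for adjacent vertices can push $n$ up to $2$ or more. The sharp bound $\kappa < 1 + \frac{1}{l(\beta_{\mathrm{long}})-1}$ is precisely what prevents any such vertex from subsequently becoming fireable: a fireable vertex with $n\geq 2$ and $\alpha\in\Delta^0_-$ would require $\kappa n < -\mathrm{ht}(\alpha) \leq l(\beta_{\mathrm{long}})$, while the $\kappa$-bound gives $\kappa(l(\beta_{\mathrm{long}})-1)<l(\beta_{\mathrm{long}})$, and combining these with the integrality of heights and a careful book-keeping of how the $\alpha^{(i_0)}$-coefficients evolve yields a contradiction with the matched dynamics. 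Any putative mismatch in firing rules at some intermediate step would, by strong convergence (Lemma \ref{termination} for game 1 and the standard Mozes theorem for game 2), force inconsistency with the common length or the common terminal reached by the two games; this will be the technical heart of the argument.

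Finally, once the firing sequences match, game 1 terminates at $-\rho^{(\iota(i_0))}$ by Theorem \ref{stratthm} for an involution $\iota$, so if $w\in W$ denotes the common Weyl element realized by the firing sequence, then $w\rho^{(i_0)} = -\rho^{(\iota(i_0))}$ and game 2 terminates at $w u_\kappa = -\rho^{(\iota(i_0))} + \kappa\, w\omega^{(i_0)}$. To identify $w\omega^{(i_0)} = \rho^{(\iota(i_0))} + \omega^{(\iota(i_0))}$, I check that the right-hand side has $\delta$-pairing equal to $1$ (which is preserved by $W$), lies in the dominant cone (consistent with game 2 terminating there), and matches the amplitudes forced by the $\iota$-symmetry from Theorem \ref{stratthm}; alternatively, inverting the firing trajectory provides a direct verification. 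Substituting yields the terminal $(\kappa-1)\rho^{(\iota(i_0))} + \kappa\omega^{(\iota(i_0))}$ for game 2, with the involution $\iota$ being the one from Theorem \ref{stratthm}, which is an automorphism of $\Gamma$ by Proposition \ref{ioprop}.
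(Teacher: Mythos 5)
Your linearization is the right one---and it is essentially the paper's: writing the amplitude at vertex $j$ of the game-2 configuration as $\mathrm{ht}(\alpha)+n\kappa$, where $(w^{(k)})^{-1}\alpha^{(j)}=\alpha+n\delta$ with $\alpha$ supported on $I_0$, so that the game-1 amplitude is the constant term $\mathrm{ht}(\alpha)$. But the induction you hang on it rests on a false invariant, and your proposed repair of that invariant is also false. The claim that every vertex has $n\in\{0,1\}$ at every reachable configuration already fails in $\widetilde{A_3}$: the positive real root $2\delta-\beta_{\mathrm{long}}$ pairs with $u_\kappa$ to give $2\kappa-3<0$ for the allowed $\kappa$, so it is an inversion of $u_\kappa$ and is genuinely fired at some step; just before that step the corresponding vertex has $n=2$. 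Your patch asserts that the bound on $\kappa$ prevents any vertex with $n\geq 2$ from becoming fireable, but the two inequalities you write down---fireability requires $\kappa n<-\mathrm{ht}(\alpha)\leq l(\beta_{\mathrm{long}})$, while the hypothesis gives $\kappa(l(\beta_{\mathrm{long}})-1)<l(\beta_{\mathrm{long}})$---are mutually \emph{consistent} (for $n=2$, $\alpha=-\beta_{\mathrm{long}}$ they say the same thing), so no contradiction is available, and the ``careful book-keeping'' you defer to cannot be carried out. Since you yourself flag this as the technical heart of the argument, the proof has a genuine gap at its core.

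What actually works is asymmetric, and only one direction is pointwise. For ``game-2 firing $\Rightarrow$ game-1 firing'' you need no invariant: a negative amplitude arising in the game from $u_\kappa$ equals $\gamma\cdot u_\kappa$ for a \emph{positive} root $\gamma=\alpha+n\delta$, so $n\geq 0$; if $n=0$ then $\alpha\in\Delta^0_+$ and the amplitude is positive, so in fact $n\geq 1$, and then $\mathrm{ht}(\alpha)<-n\kappa<-1$ forces $\mathrm{ht}(\alpha)\leq -2$ by integrality---the $n\geq 2$ firings are harmless here. The converse direction cannot be done by comparing amplitudes at a single step: one must exclude that a reachable configuration has a vertex with $\mathrm{ht}(\alpha)\leq -2$ but $\mathrm{ht}(\alpha)+n\kappa\geq 0$ (e.g.\ $n=2$, $\mathrm{ht}(\alpha)=-2$, a root that exists in every type with $\mathrm{ht}(\beta_{\mathrm{long}})\geq 2$), and nothing local rules this out. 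The paper closes this globally: Lemma \ref{trlem} identifies the game-2 terminal as the regular dominant vector $(\kappa-1)\rho^{(\iota(i_0))}+\kappa\omega^{(\iota(i_0))}$, Theorem \ref{stratthm} gives that all maximal game-1 sequences have the same length $N$, and a move count (each valid game-2 move removes exactly one inversion, so an invalid one would make it impossible to finish in $N$ total moves) forces every game-1 move to be a valid game-2 move. Your final paragraph identifying the terminals is fine once the equivalence of firing sequences is established, but the proposal as written does not establish it.
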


The proposition relies on the following basic, and probably well known, lemma (whose proof we supply for the reader's convenience). For each vertex $j \in I_0$, let $T_j: \R^I \rightarrow \R^I$ be the ``translation'' element of the form
\begin{equation}\label{translations}
T_j (v) = v + (\delta \cdot v) (\omega^{(j)} - \delta_j \omega^{(i_0)}).
\end{equation}
Let $P^\vee_{I_0}$ be the coweight lattice (with basis the fundamental
coweights $\omega^{(i)}, i \in I_0$) and let $Q^\vee_{I_0} := \langle
\alpha^\vee: \alpha \in \Delta_{I_0}\rangle \subset P^\vee_{I_0}$ be
the coroot sublattice. In the basis of fundamental coweights,
$P^\vee_{I_0} = \Z^{I_0} \supseteq \langle \alpha^\vee: \alpha \in
\Delta_{I_0} \rangle$, where $(\alpha^\vee)_i = \langle \alpha,
\alpha^{(i)} \rangle$ for all $i \in I_0$.

\begin{lemma} \label{trlem} For any vertex $j \in I_0$, there is a unique element $t_j \in W$ whose action on $\R^I$ is of the form $t_j = T_j \circ \gamma_j$, where $\gamma_j: \R^I \rightarrow \R^I$ is a permutation of coordinates corresponding to an automorphism of the graph $\Gamma$.  Moreover, the map $\omega^{(j)} \mapsto \gamma_j$ induces a group monomorphism $P^\vee_{I_0} / Q^\vee_{I_0} \into \Aut(\Gamma)$.\footnote{The group $P^\vee_{I_0} / Q^\vee_{I_0}$ is well known and called the \emph{fundamental group} of the root system.}
\end{lemma}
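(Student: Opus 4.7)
The plan is to recognize $T_j$ as translation by $\omega^{(j)}$ in the extended affine Weyl group $\widetilde W := W_0 \ltimes P^\vee_{I_0}$, and then to invoke the classical structure theorem $\widetilde W \cong W \rtimes \Omega$, where $\Omega \cong P^\vee_{I_0}/Q^\vee_{I_0}$ is the stabilizer of the fundamental alcove and embeds into $\Aut(\Gamma)$ via its action on the walls (see e.g.\ Bourbaki, \emph{Groupes et alg\`ebres de Lie}, Ch.~VI, \S 2).

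First I would verify that $T_j$ lies in this extended affine Weyl group. A direct computation gives $\delta \cdot (\omega^{(j)} - \delta_j \omega^{(i_0)}) = \delta_j - \delta_j = 0$, so $T_j$ preserves the function $v \mapsto \delta \cdot v$, acts trivially on $\{\delta \cdot v = 0\}$, and acts on the affine hyperplane $\mathcal{H} := \{\delta \cdot v = 1\}$ as translation by $\mu_j := \omega^{(j)} - \delta_j \omega^{(i_0)}$. Identifying $\mathcal{H}$ with $P^\vee_{I_0} \otimes_\Z \R$ by sending $\omega^{(i)}$ (for $i \in I_0$) to the $i$-th fundamental coweight of $W_0$ and $\omega^{(i_0)}$ to the origin, the translation becomes translation by $\omega^{(j)}$, while the restriction of $W$ to $\mathcal{H}$ becomes the usual affine action of $W_0 \ltimes Q^\vee_{I_0}$. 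Hence $T_j \in \widetilde W$.

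Using the structure theorem, $T_j$ has a unique factorization $T_j = t_j \pi_j^{-1}$ with $t_j \in W$ and $\pi_j \in \Omega$. Setting $\gamma_j := \pi_j^{-1}$, viewed as the coordinate permutation on $\R^I$ induced by the associated element of $\Aut(\Gamma)$, gives $t_j = T_j \gamma_j$ of the required form. Uniqueness of $t_j$ follows from the faithfulness of the $W$-action on $\R^I$; uniqueness of $\gamma_j$ then follows because any two admissible choices $\gamma_j, \gamma_j'$ would satisfy $\gamma_j (\gamma_j')^{-1} \in W \cap \Omega = \{e\}$ inside $\widetilde W$.

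For the monomorphism, the composite $P^\vee_{I_0} \to \widetilde W \onto \widetilde W / W \cong \Omega$ sends $\omega^{(j)} \mapsto \gamma_j^{-1}$ and is precisely the classical surjection realizing the isomorphism $P^\vee_{I_0}/Q^\vee_{I_0} \iso \Omega$; composing with inversion (a homomorphism, since $\Omega$ is abelian) yields the stated map $\omega^{(j)} \mapsto \gamma_j$ as a group monomorphism $P^\vee_{I_0}/Q^\vee_{I_0} \into \Aut(\Gamma)$. The one place that needs real care is the first step, namely matching the paper's coweight-basis conventions for $\R^I$ and $\delta$ with the standard setup of $\widetilde W$ acting on $\mathcal{H}$; once that identification is in place, the rest is a routine unwinding of definitions.
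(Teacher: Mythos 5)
Your proof is correct, but it takes a genuinely different route from the paper's. You outsource the whole lemma to the classical structure theory of the extended affine Weyl group: after checking that $T_j$ preserves $\delta\cdot v$ and restricts on $\mathcal{H}$ to translation by the fundamental coweight $\omega^{(j)}$, you invoke $\widetilde W = W_0\ltimes P^\vee_{I_0}\cong W\rtimes\Omega$ with $\Omega\cong P^\vee_{I_0}/Q^\vee_{I_0}$ acting on the alcove walls, and everything (existence, uniqueness, the homomorphism property, and injectivity) falls out of the semidirect-product decomposition. The paper instead gives a self-contained argument: existence and uniqueness of $t_j$ come from observing that $T_j$ is a symmetry of the alcove triangulation of $\mathcal{H}_\kappa$, so some unique $t_j\in W$ matches it on the dominant alcove and $T_j^{-1}\circ t_j$ is then an isometry of that alcove, i.e.\ a diagram automorphism; the homomorphism property (commutativity of the $\gamma_j$) is checked by a case analysis on the type of $\Gamma$; and the kernel is identified with $Q^\vee_{I_0}$ by an explicit computation with $f'_{i_0}f_{i_0}=\psi(\beta_{\mathrm{long}}^\vee)$. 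Your route is shorter and gets the homomorphism property for free (the composite $P^\vee_{I_0}\to\widetilde W\to\widetilde W/W$ is automatically a homomorphism), at the cost of importing Bourbaki and of the convention-matching you rightly flag as the delicate step; the paper's route is longer but elementary and keeps the argument internal to the setup already developed.

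Two small bookkeeping points. First, with your factorization $T_j=t_j\pi_j^{-1}$ you should set $\gamma_j:=\pi_j$ (so that $t_j=T_j\circ\gamma_j$), not $\pi_j^{-1}$; since $\Omega$ is abelian this only composes the final map with inversion and affects nothing. Second, the uniqueness argument needs $W\cap\Aut(\Gamma)=\{e\}$ (inside the affine transformations of $\mathcal{H}$), not merely $W\cap\Omega=\{e\}$, since a priori a competing $\gamma'_j$ could be any diagram automorphism; the same one-line argument covers this, because any coordinate permutation from $\Aut(\Gamma)$ preserves the fundamental alcove, and the only element of $W$ doing so is the identity.
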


\begin{proof}
Fix $\kappa > 0$ and let us consider the hyperplane $\mathcal{H}_\kappa := \{v \in \R^I \mid \delta \cdot v = \kappa\}$, fixed under $W$. It suffices to show that the lemma holds restricted to $\mathcal{H}_\kappa$.  Note that the triangulation of $\mathcal{H}_\kappa$ by its intersection with the Weyl chambers has the translational symmetry $T_j|_{\mathcal{H}_\kappa}$. Thus, there must exist a unique element $t_j \in W$ such that $t_j$ takes the dominant Weyl chamber (i.e., the one whose amplitudes are all nonnegative) to its translate under $T_j$.  We must therefore have that $T_j^{-1} \circ t_j$ is an isometry of the dominant Weyl chamber.  Thus, $T_j^{-1} \circ t_j$ is induced by an automorphism of $\Gamma$.

To see that this induces a homomorphism $\psi: P^\vee_{I_0}
\rightarrow \Aut(\Gamma)$, we need to show that the $\gamma_j$ all
commute with each other.  When $\Gamma$ is not of type $A$ or $D$, this is
immediate since $\Aut(\Gamma)$ is abelian.  In the case of type $A$,
an easy computation shows that, when $j$ is adjacent to $i_0$, then
$\gamma_{i_0}$ is a rotation of the diagram $\Gamma$ (moving each
vertex to an adjacent one), and it is easy to see that all the
$\gamma_{j'}$ can be obtained from products of conjugates of this one,
and therefore that the images of the $\gamma_{j'}$ generate the
abelian normal subgroup $\Z/n < D_{2n} = \Aut(\Gamma)$, in this case.
In the case of type $D$, the image of $P_{I_0}$ is a $4$-element
subgroup of $D_4$ and is hence abelian (we can also compute explicitly
that the image is abelian, like in type $A$).

Finally, we need to show that the kernel of $\psi$ is
$Q^\vee_{I_0}$. Let $f_{\beta_{\mathrm{long}}} \in W_0 \subset W$ denote the
reflection corresponding to the maximal root $\beta_{\mathrm{long}} \in
(\Delta_{\Gamma_0})_+$, so $f_{\beta_{\mathrm{long}}}(v) = v - 
(\beta_{\mathrm{long}} \cdot v)\beta_{\mathrm{long}}^{\vee}$.  Let us define
\begin{equation}
f_i' := \begin{cases} f_i, & \text{if $i \neq i_0$}, \\
                     f_{\beta_{\mathrm{long}}}, & \text{if $i = i_0$}.
\end{cases}
\end{equation}
Then, $f_{i_1} f_{i_2} \cdots f_{i_m}$ is a translation if and only if
$f'_{i_1} f'_{i_2} \cdots f'_{i_m} = 1$, and in this case, $$f_{i_1}
f_{i_2} \cdots f_{i_m} (f_{i_1}' f'_{i_2} \cdots f_{i_m}')^{-1}$$ can
be written as a product of conjugates of the translation $f'_{i_0}
f_{i_0} = \psi(\beta_{\mathrm{long}}^{\vee})=\prod_{i \in I_0} T_i^{\langle \alpha^{(i)}, \beta_{\mathrm{long}}^{\vee}
  \rangle}$.  In other words, the translations of $W$ are exactly
$\psi(Q^\vee_{I_0})$.
\end{proof}

We remark that the above lemma also allows one to prove Proposition
\ref{ioprop}, using the statement of Proposition \ref{strloopcmpp}.
Indeed, for $\kappa$ as in Proposition \ref{strloopcmpp}, we see that
$v := -\rho^{(i_0)} + \kappa \omega^{(i_0)}$ is in the same affine
Weyl orbit as $u_\kappa = \rho^{(i_0)} + \kappa \omega^{(i_0)}$
(since, by playing the numbers game on the Dynkin subgraph $\Gamma_0$,
one may go from $-\rho^{(i_0)}$ to $\rho^{(i_0)}$). Thus, $v$ is in
the affine Weyl orbit as the final configuration $u :=
(\kappa-1)\rho^{(\iota(i_0))} + \kappa \omega^{(\iota(i_0))}$, which
is in the Weyl chamber containing $\iota(\prod_{i \in I_0} T_i v)$, and
hence $u$ is obtainable from $v$ by applying $|I_0|$
translates $t_i$.  Thus, the formula for $\iota$ follows 
by computing the automorphisms $\gamma_i$,
which is not difficult (and probably well known).

\begin{proof}[Proof of Proposition \ref{strloopcmpp}]
When we play the numbers game beginning with $u_\kappa$, let us keep track of not only the resulting configuration $f_{i_m} f_{i_{m-1}} \cdots f_{i_1} (u_\kappa)$, but also the vector $f_{i_m} f_{i_{m-1}} \cdots f_{i_1} (u_x)$, where $\kappa$ is replaced by an indeterminate $x$, and such that valid firing sequences beginning with $u_x$ are, by definition, sequences that become valid when $x$ is evaluated at $\kappa$.

First, we claim that, playing the numbers game in this way from $u_x$, any amplitude of the form $-1 + a x$ that appears must have $a \geq 1$.  The reason for this is that, if $a \leq 0$, then $-1 + a \kappa < 0$, and since this is negative, it can only appear by playing the numbers game if $\alpha \cdot u_x = \alpha \cdot \rho^{(i_0)} + x (\alpha \cdot \omega^{(i_0)})= -1 + ax$ for some positive root $\alpha \in \Delta_+$, but the latter is impossible. Therefore, when playing the numbers game beginning with $u_x$, we never fire an amplitude of the form $-1 + ax$, for $a \leq 0$.  As a consequence, any firing sequence for $u_x$ which becomes valid when $x$ is evaluated at $\kappa > 1$, must also be a firing sequence for $u_0 = \rho^{(i_0)}$ where only amplitudes $< -1$ are fired.

Next, we claim that, if $\kappa \in (1, 1 + \frac{1}{l(\beta_{\mathrm{long}})-1})$, then the numbers game beginning with $u_{\kappa}$ terminates at the configuration $(\kappa - 1 )\rho^{(\iota(i_0))} + \kappa \omega^{(\iota(i_0))}$, and that playing the same moves from $u_x$ yields $y:= (x - 1 )\rho^{(\iota(i_0))} + x \omega^{(\iota(i_0))}$. By Lemma \ref{trlem}, the element $y$ is in the $W$-orbit of $u_{x}$, for some automorphism $\iota \in \Aut(\Gamma)$. If we evaluate $y$ at $x = \kappa \in (1, 1 + \frac{1}{l(\beta_{\mathrm{long}})-1})$ we get a dominant weight (in fact all coefficients are positive, so we get a regular dominant weight), and thus the numbers game can only terminate at $y$, and since $\delta \cdot u_{\kappa} = \kappa > 0$, the numbers game must terminate.

As a consequence of the previous two paragraphs, for any $\kappa \in (1, 1+ \frac{1}{l(\beta_{\mathrm{long}})-1})$, and any maximal valid firing sequence beginning at $u_{\kappa}$, the same firing sequence takes $\rho^{(i_0)}$ to $-\rho^{(\iota(i_0))}$. (Note that this gives another proof of Theorem \ref{strongloopthm}.)

It remains to show that any firing sequence from $\rho^{(i_0)}$ of
amplitudes $< -1$ is also a valid firing sequence for $u_\kappa$ with
$\kappa$ as above.  Let $N$ be the number of moves of any firing
sequence as above.  By Theorem \ref{stratthm}, any maximal firing
sequence of vertices of amplitude $< -1$ takes $\rho^{(i_0)}$ to
$-\rho^{(\iota(i_0))}$, in exactly the same number of moves, $N$.

Therefore, any firing sequence of length $m$ beginning with $\rho^{(i_0)}$ of vertices of amplitude $< -1$ must take $u_{\kappa}$ to a configuration which can win the numbers game (arrive at $(\kappa - 1 )\rho^{(\iota(i_0))} + \kappa \omega^{(\iota(i_0))}$) in $\leq N-m$ moves. As a result, the sequence must be a valid firing sequence for $u_{\kappa}$, since each move must have decreased by one the set of positive roots which pair negatively with the current configuration.
\end{proof}

In the case that $\Gamma = \widetilde{A_n}$, we may immediately deduce a formula for the number of moves required to go from $\rho^{(i_0)}$ to $-\rho^{(\iota(i_0))}$.  Indeed, this equals the number of positive roots $\alpha \in \Delta_+$ such that $\alpha \cdot u_\kappa < 0$ when $\kappa \in (1, 1 + \frac{1}{n})$.  For any positive root $\alpha \in \Z_{\geq 0}^I$ such that $\alpha_{i_0} = m$, then $\alpha = (m-1) \delta + \alpha'$, where $\alpha'$ is any positive root supported on a segment containing $i_0$, of length $\leq n-m$. Let us identify $I \cong \{0,1,2,\ldots, n\}$, with $i_0 = 0$, and with two integers adjacent if they differ by one modulo $n+1$. Then, to pick a pair of $\alpha' = \alpha^{(i)} + \alpha^{(i+1)} + \cdots + \alpha^{(n)} + \alpha^{(0)} + \alpha^{(1)} + \cdots + \alpha^{(j)}$ and the integer $0 \leq m \leq n-(n-i+j+2)$ is equivalent to picking the triple $j < j+m+1 < i$ of distinct integers in $\{1,2,\ldots,n\}$. Thus, the total number of such $\alpha$ is ${n+2 \choose 3}$.

As a result, to go from any configuration $v$ with $\delta \cdot v =
0$ to $u \in W v$, it takes no more than ${n+1 \choose 2} + {n+2
  \choose 3} = \frac{n(n+1)(n+5)}{6}$ moves. Indeed, assume without
loss of generality that $u, v \in W \rho^{(i_0)}$. Then, letting $i
\in I$ be the extending vertex such that $u$ can be obtained from
$\rho^{(i)}$ by firing vertices of amplitude $< -1$, it can take at
most ${n+1 \choose 2}$ moves to go from $v$ to $\rho^{(i)}$, and and
then at most ${n+2 \choose 3}$ moves to go from there to $u$.

We will prove a more general result (and for any extended Dynkin graph) in \S \ref{hilbpolysec} below.

\subsection{The poset obtained from the strategy}\label{the poset}

For each extending vertex $i$, let $P_i$ be the poset of configurations obtainable from $\rho^{(i)}$ by firing vertices of amplitude $< -1$.  By Theorem \ref{stratthm}, 
\begin{equation} \label{wdece}
W \rho^{(i_0)} = \{ \widetilde{w \rho}: w \in W_0\} = \bigsqcup_{i \text{ an extending vertex}} P^{(i)}.
\end{equation}
Moreover, each $P^{(i)}$ is a graded poset: this means that each element $v \in P^{(i)}$ has a well-defined degree, given by the number of firings of vertices with amplitude $< -1$ needed to go from $\rho^{(i)}$ to $v$.  They are also \emph{self-dual}, which means that the poset is isomorphic to the one where the ordering is reversed.

It is clear that the $P^{(i)}$ are all isomorphic posets. Since $W_0 \subset W$ acts freely on $\rho^{(i_0)}$, we may view \eqref{wdece} as a decomposition of $W_0$ itself into isomorphic graded posets, $W_0 = \bigsqcup_i W_0^{(i)}$.  Moreover, the isomorphism $W_0^{(i_0)} \iso W_0^{(i)}$ is nothing but $w \mapsto r^{(i)} w$, where $r^{(i)} \in W_0$ is the element such that $\widetilde{r^{(i)} \rho} = \rho^{(i)}$.

We remark that \eqref{wdece} is quite canonical. In particular, it does not depend on the choice of the dominant vector $\rho^{(i_0)}$: any element $u$ whose restriction to $\Gamma_0$ is in the interior of the dominant Weyl chamber (i.e., all amplitudes are positive) gives rise to the same decomposition \eqref{wdece}, except that $P^{(i)}$ are now defined as the graded posets of configurations obtainable along a minimal-length firing sequence from $r^{(i)} u$ to $-r^{\iota(i)} u$.  When we pass to the decomposition of $W_0$ itself into isomorphic graded posets, the result is independent of $u$.

As we saw at the end of the previous section, $W_0^{(i_0)}$ is quite different from the weak or Bruhat orders on $W_0$: rather than having at most quadratic degree in the number of vertices of the Dynkin diagram, we have \emph{cubic} degree.  The elements of the poset $W_0^{(i_0)}$ are a canonical choice of representatives of the right cosets of the group generated by the $r^{(i)}$ (which is a subset of $\Aut(\Gamma)$).

As a consequence of Proposition \ref{strloopcmpp}, the poset $W_0^{(i_0)}$ may be identified with an interval under the (left) weak order in the affine Weyl group---we explain this below.  First, note that, for any element $v \in P^{(i_0)} \cong W_0^{(i_0)}$ obtained from $\rho^{(i_0)}$ by a sequence of firings $i_1, i_2, \ldots, i_m \in I$ of amplitudes $< -1$, Proposition \ref{strloopcmpp} shows that the element $s_{i_m} s_{i_{m-1}} \cdots s_{i_1} \in W$ depends only on $v$ and not on the choice of firing sequence.  That is, we obtain an embedding $P^{(i_0)} \into W$. Let $\varphi: W_0^{(i_0)} \cong P^{(i_0)} \into W$ be the resulting composition.  This is a section of the quotient $\chi: W \onto W_0$ defined by $g \widetilde{v} = \widetilde{\chi(g) v}$ for any $v \in \R^{I_0}$ (that is, $\chi(s_i) = s_i$ for $i \in I_0$, and $\chi(s_{i_0})$ is the reflection about the maximal root of $\Gamma_0$).  Precisely, $\chi \circ \varphi: W_0^{(i_0)} \rightarrow W_0$ is the inclusion.  We will now show that the image poset $\varphi(W_0^{(i_0)}) \subset W$ is nothing but an interval in $W$ under the weak order.

We recall the definition of weak order. For $g \in W$, the \emph{length} of $g$, denoted $l(g)$, is the minimal number of simple reflections $s_i$ needed to multiply to $g$.  The \emph{left} weak order in $W$ is the ordering such that $g \leq_L h$ if and only if $l(h) = l(g) + l(hg^{-1})$, and the \emph{right} weak order in $W$ is the ordering such that $g \leq_R h$ if and only if $l(h) = l(g) + l(g^{-1}h)$.

Generally, define the \emph{numbers game ordering} to be: $v'$ is less than $v''$ if $v''$ can be obtained from $v'$ by playing the numbers game.  

\begin{lemma} Let $\Gamma$ be any graph associated to a Coxeter group $W$.  Given any configuration $v$ for which the numbers game terminates at $u = g v$ for $g \in W$, then the map $W \rightarrow \R^I, h \mapsto h u$ restricts to an isomorphism of the interval $[\id, g]_{<_L}$ with the numbers game poset from $v$ to $u$.
\end{lemma}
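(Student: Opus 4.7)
The plan is to identify valid firing sequences for the numbers game starting at $v$ and terminating at $u$ with reduced expressions for $g$, and then invoke the standard identification of a weak-order interval with the set of suffixes of reduced expressions. (For the map to send $\id \mapsto v$ and $g \mapsto u$, one should read it as $h \mapsto hv$ rather than $h \mapsto hu$; we proceed under this interpretation, the two being equivalent up to the typographic identification of starting and terminal configurations.)

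First I would show that any firing sequence $(i_1, \ldots, i_N)$ from $v$ to $u$ produces the group element $g = s_{i_N} \cdots s_{i_1}$ (the $W$-action is free on the interior of Tits-cone chambers), and that $N = l(g)$. The latter follows from strong convergence (Theorem~(i)): a shorter firing sequence would also terminate at $u$, contradicting the equal-length clause of strong convergence. Conversely, every reduced expression for $g$ is realized by a valid firing sequence from $v$. By Matsumoto's theorem, any two reduced expressions are connected by commutation moves ($s_i s_j = s_j s_i$ when $n_{ij}=2$) and braid moves of length $n_{ij}$. Commutation moves preserve validity trivially since non-adjacent vertices do not interact. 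Braid moves reduce to the rank-two dihedral case handled in the proof of Lemma~\ref{termination}: restricting the current configuration to the pair $\{i,j\}$, alternately firing $i$ and $j$ is valid in either starting order, since at each step the amplitude being fired is $\alpha \cdot (v_i, v_j)$ for a positive rank-two root $\alpha$ with nonpositive integral coefficients, and this stays negative as long as the restriction has not yet become nonnegative.

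Given the previous step, I would identify the set of intermediate configurations as $\{h v : h \leq_L g\}$. After the first $k$ firings, the configuration is $h_k v$ with $h_k := s_{i_k} \cdots s_{i_1}$, and the factorization $g = (s_{i_N} \cdots s_{i_{k+1}}) \cdot h_k$ is length-additive, so $h_k \in [\id, g]_{\leq_L}$. Conversely, any $h \leq_L g$ is a suffix of some reduced expression for $g$ by the standard characterization of left weak order, and the preceding paragraph produces a valid firing sequence realizing that reduced expression and visiting $hv$ at the appropriate step. Hence $h \mapsto hv$ is a bijection from $[\id, g]_{\leq_L}$ onto the numbers-game poset from $v$ to $u$.

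Finally, order-compatibility is transparent in both directions. If $h \leq_L h' \leq_L g$, writing $h' = w h$ reducedly and concatenating a reduced expression for $w$ onto one for $h$ yields a reduced expression for $h'$ which extends to one for $g$; the corresponding firing subsequence takes $hv$ to $h' v$, so $hv$ precedes $h' v$ in the numbers-game order. Conversely, any firing sequence from $hv$ to $h' v$ concatenates with firing sequences $v \to hv$ and $h'v \to u$ to give a valid firing sequence $v \to u$, producing a reduced factorization $h' = w h$ with lengths adding, whence $h \leq_L h'$. The sole technical obstacle is the braid-move step in the correspondence between reduced expressions and firing sequences; once handled by the rank-two dihedral argument already in hand, the remainder is a routine translation between Coxeter combinatorics and numbers-game dynamics.
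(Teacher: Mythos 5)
Your overall architecture (valid terminating firing sequences $\leftrightarrow$ reduced words for $g$, then suffixes of reduced words $\leftrightarrow$ the left weak interval) can be made to work, and your reading of the map as $h \mapsto hv$ is the right way to make the endpoints match. But the foundation of the argument --- the claim that any valid terminating firing sequence $(i_1,\dots,i_N)$ has $N=l(g)$, i.e.\ that $s_{i_N}\cdots s_{i_1}$ is automatically a \emph{reduced} word --- is not actually proved. Strong convergence only says that all terminating plays have the same length $N$; it does not say that this common length equals $l(g)$. Your appeal to ``a shorter firing sequence would also terminate at $u$'' is circular: the only candidate for a shorter sequence comes from a reduced expression for $g$, and your argument that reduced expressions are valid firing sequences (via Matsumoto moves) needs, as a base point, one reduced expression already known to be a valid firing sequence --- which is exactly the statement at issue. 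Without the first step, the second has nothing to propagate from.

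The missing idea is the one the paper's proof is built on: writing each intermediate configuration as $hu$ with $u$ the terminal (dominant) configuration, a firing at vertex $i$ is valid only if $(hu)\cdot\alpha^{(i)} = u\cdot (h^{-1}\alpha^{(i)})<0$, which by dominance of $u$ forces $h^{-1}\alpha^{(i)}$ to be a negative root, i.e.\ $l(s_ih)=l(h)-1$ (and conversely when $u$ is regular, as it is in the application). Thus every valid firing is a length-decreasing step toward $u$, the game from $v$ takes exactly $l(g)$ moves, and the valid firings at a given configuration are exactly the covering relations of the weak order --- the lemma then follows in two lines, which is the paper's entire proof. Inserting this observation repairs your $N=l(g)$ step, but it also renders the Matsumoto detour unnecessary, since the descent characterization already identifies both the reachable configurations and the edges of the game poset with the interval $[\id,g]$ and its covers. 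Your rank-two braid-move analysis is correct in itself (it is the amplitude-$<0$ analogue of the computation in the proof of Lemma~\ref{termination}), but here it is doing work that the dominance of $u$ does for free.
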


\begin{proof} By strong convergence, if a valid firing takes $h u$ to $s_i h u$, where $h \in W$, $u$ is dominant, then $l(s_i h) = l(h) - 1$. As a consequence, it inductively follows that it takes exactly $l(h)$ moves to take $h u$ to $u$ by playing the numbers game.  The result follows immediately.
\end{proof}

Returning to our situation, let $w^{\text{top}} \in W_0^{(i_0)}$ be the top degree element, i.e., $w^{\text{top}} \rho^{(i_0)} = -\rho^{(\iota(i_0))}$. We immediately deduce 

\begin{cor}
The isomorphism $\varphi$ takes the poset $W_0^{(i_0)}$ to the interval $[1,\varphi(w^{\text{top}})]_{<_L}$.
\end{cor}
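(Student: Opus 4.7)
The plan is to apply the immediately preceding lemma to the affine configuration $u_\kappa := \rho^{(i_0)} + \kappa\, \omega^{(i_0)}$ for $\kappa \in (1, 1 + \tfrac{1}{l(\beta_{\mathrm{long}})-1})$, using Proposition~\ref{strloopcmpp} as the bridge between the ``standard'' numbers game starting at $u_\kappa$ and the ``amplitude $<-1$'' game starting at $\rho^{(i_0)}$. By Proposition~\ref{strloopcmpp}, a sequence of vertices is a valid firing sequence for $u_\kappa$ if and only if it is a firing sequence of vertices of amplitude $<-1$ beginning from $\rho^{(i_0)}$, and any maximal such sequence has a common length $N$ and takes $u_\kappa$ to the regular dominant configuration $y := (\kappa-1)\rho^{(\iota(i_0))} + \kappa\, \omega^{(\iota(i_0))}$.

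Next I would identify the element $g \in W$ appearing in the preceding lemma (applied to $v = u_\kappa$, $u = y$). Fix any maximal firing sequence $(i_1, i_2, \ldots, i_N)$. Tracking the $W$-action on coweights step by step gives $y = (s_{i_N} s_{i_{N-1}} \cdots s_{i_1}) \cdot u_\kappa$, so $g = s_{i_N} s_{i_{N-1}} \cdots s_{i_1}$. By Proposition~\ref{strloopcmpp}, the same sequence takes $\rho^{(i_0)}$ to $-\rho^{(\iota(i_0))} = w^{\mathrm{top}} \rho^{(i_0)}$ by firing vertices of amplitude $<-1$, so by the very definition of $\varphi$ we have $\varphi(w^{\mathrm{top}}) = s_{i_N} s_{i_{N-1}} \cdots s_{i_1} = g$. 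This identification also shows that $l(\varphi(w^{\mathrm{top}})) = N$, so the firing sequence realizes a reduced expression.

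The preceding lemma then furnishes a poset isomorphism between $[1, \varphi(w^{\mathrm{top}})]_{<_L}$ and the numbers game poset from $u_\kappa$ to $y$, realized by $h \mapsto h\cdot u_\kappa$. By Proposition~\ref{strloopcmpp}, the latter poset is canonically identified with $P^{(i_0)} \cong W_0^{(i_0)}$: given any prefix $(i_1, \ldots, i_m)$ of a maximal firing sequence, the corresponding configuration in the $u_\kappa$-game is $h_m\, u_\kappa$ while the corresponding configuration in $P^{(i_0)}$ is $h_m\, \rho^{(i_0)}$, where $h_m := s_{i_m} \cdots s_{i_1}$. By definition $\varphi(h_m \rho^{(i_0)}) = h_m$, so $\varphi$ is precisely the inverse of the lemma's identification, and hence carries $W_0^{(i_0)}$ bijectively and order-preservingly onto $[1, \varphi(w^{\mathrm{top}})]_{<_L}$.

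The only real bookkeeping point, which I expect to be the only mild obstacle, is keeping the two order conventions straight: a firing sequence $(i_1, \ldots, i_m)$ on coweights corresponds to the product $s_{i_m} \cdots s_{i_1}$ in reverse order, and this same reversal convention appears consistently in Proposition~\ref{strloopcmpp}, in the definition of $\varphi$, and in the match between left weak order and the firing order used by the preceding lemma. Once these conventions are aligned, the corollary is immediate.
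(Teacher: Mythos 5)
Your proposal is correct and follows essentially the same route as the paper, which derives the corollary "immediately" from the combination of Proposition~\ref{strloopcmpp} (identifying the amplitude~$<-1$ game from $\rho^{(i_0)}$ with the ordinary game from $u_\kappa$) and the preceding lemma on weak-order intervals, with $g = \varphi(w^{\text{top}})$ identified exactly as you do. Your explicit tracking of the prefix products $h_m = s_{i_m}\cdots s_{i_1}$ and the reversal convention just fills in the details the paper leaves implicit.
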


\subsection{Triangulation of the unit hypercube in the reflection representation}

Denote by $\mathcal{K}_{\kappa} := \{v \in \mathcal{H}_\kappa \mid v_i \in [0,\kappa], \forall i \in I_0 \}$ the ``unit hypercube'' in the hyperplane $\mathcal{H}_\kappa := \{v \in \R^I \mid \delta \cdot v = \kappa\}$. Note that $\mathcal{K}_{\kappa}$ is a fundamental domain under the group generated by the translations $T_j$ used in Lemma \ref{trlem}, and its image under $\mathcal{H}_\kappa \iso \R^{I_0}$ is the hypercube $[0,\kappa]^{I_0}$.

Let us associate to the poset $W_0^{(i_0)}$ the directed graph $\Gamma(W_0^{(i_0)})$ whose vertices are elements of $W_0^{(i_0)}$ and whose directed edges are $g \rightarrow h$ such that $\widetilde{h \rho}$ is obtained from $\widetilde{g \rho}$ by firing a single vertex of amplitude $< -1$ (i.e., $l(\varphi(h)) = l(\varphi(g))+1$).

Let $C_e \subset \mathcal{H}_{\kappa}$ be the dominant Weyl chamber, i.e., $C_e = \{v \in \mathcal{H}_{\kappa} \mid v_i \geq 0, \forall i \in I\}$. To any polytope that is the union of Weyl chambers, we associate a dual directed graph, which is the usual dual graph forgetting orientation, with orientation given by $g C_e \rightarrow g s_i C_e$ when $l(g) < l(g s_i)$.

\begin{prop}
The graph $\Gamma(W_0^{(i_0)})$ is isomorphic to the dual of the triangulation of the unit hypercube $\mathcal{K}_\kappa$ by Weyl chambers.
\end{prop}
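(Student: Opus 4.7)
The plan is to identify $\Gamma(W_0^{(i_0)})$ with the dual graph via the isomorphism $W_0^{(i_0)} \cong [1, \varphi(w^{\text{top}})]_{<_L}$ of the preceding corollary, composed with the map $g \mapsto g^{-1} C_e$ sending a left weak-order element to a Weyl chamber. First I would verify this composite is a morphism of directed graphs: a covering edge $g \to s_i g$ in the left weak order (with $l(s_i g) = l(g) + 1$) sends to the pair of adjacent chambers $g^{-1} C_e$ and $g^{-1} s_i C_e$, and the length condition $l(g^{-1}) < l(g^{-1} s_i)$ matches the paper's orientation convention $h C_e \to h s_i C_e$ for the dual graph.

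It then remains to show that the image $T := \{g^{-1} C_e : g \in [1, \varphi(w^{\text{top}})]_{<_L}\}$ is precisely the set of Weyl chambers comprising $\mathcal{K}_\kappa$. I would first count both sides. The cube $\mathcal{K}_\kappa$ is a fundamental domain in $\mathcal{H}_\kappa$ for the translation lattice generated by the $T_j$, $j \in I_0$; by Lemma \ref{trlem} the translations contained in $W$ itself form the subgroup $\psi(Q^\vee_{I_0})$, whose index in the full lattice equals $|P^\vee_{I_0}/Q^\vee_{I_0}| = \#\{\text{extending vertices of }\Gamma\}$. Hence $\mathcal{K}_\kappa$ contains $|W_0|/\#\{\text{extending vertices}\}$ Weyl chambers, which equals $|W_0^{(i_0)}| = |T|$ by the decomposition \eqref{wdece}.

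For containment, Proposition \ref{strloopcmpp} identifies $\varphi(w^{\text{top}})^{-1} C_e$ as the chamber containing $u_\kappa$, which lies in the interior of $\mathcal{K}_\kappa$ since its amplitudes at the vertices of $I_0$ all equal $1 \in (0,\kappa)$. The dominant chamber $C_e$ meets the interior of $\mathcal{K}_\kappa$ as well, containing the terminal configuration $(\kappa-1)\rho^{(\iota(i_0))} + \kappa\omega^{(\iota(i_0))}$. The standard Coxeter-theoretic gallery-convexity principle---that any convex union of Weyl chambers contains every shortest gallery between any two of its members---then implies that every chamber $h C_e$ with $h \in [1, \varphi(w^{\text{top}})^{-1}]_{<_R}$, that is, every chamber appearing on some shortest gallery from $C_e$ to $\varphi(w^{\text{top}})^{-1} C_e$, lies in $\mathcal{K}_\kappa$. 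Since this set of chambers is precisely $T$ (under the involution $g \leftrightarrow g^{-1}$ on weak-order intervals), combining this containment with the cardinality equality yields $T = \{\text{chambers of } \mathcal{K}_\kappa\}$, completing the identification.

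The main obstacle is the gallery-convexity step: while the convexity of $\mathcal{K}_\kappa$ as a point set in $\mathcal{H}_\kappa$ is immediate, turning this into the combinatorial statement that shortest chamber-galleries stay inside requires appealing to Tits's theory of apartments in Coxeter complexes. A secondary subtlety is justifying the chamber count in the fundamental-domain argument in non-simply-laced types, where the folding of $\Gamma' \to \Gamma'/S$ must be compatible with the action of the lattice $\psi(P^\vee_{I_0})$; this is already implicit in Lemma \ref{trlem} and the discussion of Remark \ref{nonintrem}.
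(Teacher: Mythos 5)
Your proof is correct, but it takes a genuinely different route at the decisive step. Both arguments begin the same way: identify $W_0^{(i_0)}$ with $[1,\varphi(w^{\text{top}})]_{<_L}$, invert to pass to $[1,\varphi(w^{\text{top}})^{-1}]_{<_R}$, and use Proposition \ref{strloopcmpp} to recognize $\varphi(w^{\text{top}})^{-1}C_e$ as the chamber containing $u_\kappa$, which sits inside $\mathcal{K}_\kappa$ near the corner opposite the origin. The divergence is in proving that $\{hC_e : h\leq_R \varphi(w^{\text{top}})^{-1}\}$ is exactly the chamber set of the cube. The paper takes for granted that the dual of the triangulation of $\mathcal{K}_\kappa$ is $[1,w']_{<_R}$ for $w'$ the longest element with $w'C_e\subset\mathcal{K}_\kappa$ --- i.e., that the cube's chambers form a single lower interval with a unique top chamber --- and then only has to identify $w'=\varphi(w^{\text{top}})^{-1}$ by locating $u_\kappa$. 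You instead prove the two inclusions directly: gallery-convexity gives containment of the interval's chambers in the cube (legitimate, since $\mathcal{K}_\kappa$ is an intersection of half-spaces bounded by reflection hyperplanes, namely $v_i=0$ and $v_i=\kappa$, the latter being walls for the affine roots $\delta-\alpha^{(i)}$, so minimal galleries between its chambers stay inside), and the lattice-index count $\#\{\text{chambers of }\mathcal{K}_\kappa\}=|W_0|/[P^\vee_{I_0}:Q^\vee_{I_0}]=|W_0^{(i_0)}|$, via \eqref{wdece} and the identity $[P^\vee_{I_0}:Q^\vee_{I_0}]=\#\{\text{extending vertices}\}$, forces equality. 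Your version buys a proof of the interval structure that the paper merely asserts, at the price of two standard imports (Tits's convexity lemma and the fundamental-group count, the latter in any case implicit in \eqref{curid}). Two small points to tighten: state the converse edge check as well (any two adjacent chambers of $\mathcal{K}_\kappa$ are of the form $hC_e$, $hs_iC_e$ with both $h$ and $hs_i$ in the interval, which is automatically a covering pair in $<_R$ and hence a firing edge), and note explicitly that because $u_\kappa$ lies in the open cube and $\mathcal{K}_\kappa$ is a union of closed chambers, the entire chamber of $u_\kappa$ lies in $\mathcal{K}_\kappa$.
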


\begin{proof}
The dual of the triangulation of $\mathcal{K}_\kappa$ is the interval $[1,w']_{<_R}$ under the right weak order, where $w'$ is the longest element such that $w' C_e \subset \mathcal{K}_\kappa$.  We claim that $w' = \varphi(w^{\text{top}})^{-1}$.  Given the claim, the result follows immediately from the fact that $[1,\varphi(w^{\text{top}})]_{<_L}$ is isomorphic to $[1, w']_{<_R}$ under the inversion map (which sends the left weak order to the right weak order).

To prove the claim, first note that the Weyl chamber containing $u_\kappa$ is in $\mathcal{K}_\kappa$ and is the one incident to the corner $v$ of $\mathcal{K}_\kappa$ that is opposite to $e$, i.e., to the corner $v$ given by $v_i = \kappa$ for $i \in I_0$. (To see that $u_\kappa$ and $v$ are in the same Weyl chamber, one can take $\kappa$ very close to $1$ without changing which chamber $u_\kappa$ is in, which would make $u_\kappa$ very close to $v$.) Next, observe that, by Proposition \ref{strloopcmpp}, the element $\varphi(w^{\text{top}})(u_\kappa)$ is in the dominant Weyl chamber $C_e$. Thus, $\varphi(w^{\text{top}})^{-1}$ is the longest element which takes $C_e$ to $\mathcal{K}_\kappa$, i.e., we must have $w' = \varphi(w^{\text{top}})^{-1}$.
\end{proof}

\begin{figure}
\centerline{\scalebox{0.5}{\includegraphics{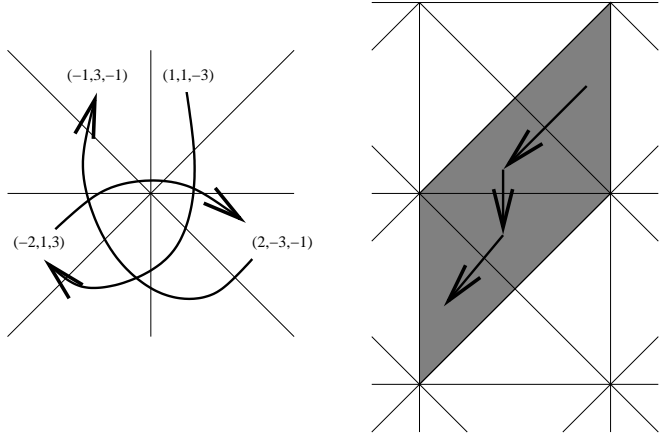}}}
\caption{The graph $\Gamma(W_0^{(i_0)})$ in $B_2$, and the unit hypercube in $\tilde{B}_2$} \label{B2Figure}
\end{figure}

In figure~\ref{B2Figure}, we demonstrate the above concepts in type $\tilde{B}_2$. On the left, we depict the graph $\Gamma(W_0^{(i_0)})$. The point $(a,b,c)$ means the point of $\R^{I}$ with those coordinates. Our convention is that the first coordinate corresponds to the root $(1,0)$ of $B_2$, the second coordinate to the root $(-1,1)$ of $B_2$, and the third coordinate to $(-1,-1)$, the negation of the longest root. On the right, we show the unit hypercube of $\tilde{B}_2$, and how $\Gamma(W_0^{(i_0)})$ occurs as the dual to this hypercube.

\section{The Hilbert polynomial of $W_0^{(i_0)}$} \label{hilbpolysec}

In what follows, we will consider any subset of $W$ as being endowed with the graded poset structure given by the \emph{right weak order}, $<_R$. Also, for any finite Coxeter group associated to a graph $\Gamma_0$ with vertex set $I_0$, let $m_1, m_2, \ldots, m_{|I_0|}$ be its Coxeter exponents.

Recall that the Hilbert polynomial $h(P;t)$ of a graded poset $P$ is defined as
\begin{equation}
h(P;t) = \sum_{d \geq 0} |\{x \in P: |x|=d\}| t^d,
\end{equation}
where $|x|$ denotes the degree of $x$. We may explicitly compute the
Hilbert polynomial of the graded poset $W_0^{(i_0)}$:

\begin{thm} \label{hilbpolythm}
The Hilbert polynomial of $W_0^{(i_0)}$ is given by
\begin{equation}
h(W_0^{(i_0)}; t) = \frac{\prod_{i \in I_0} (1 - t^{l(t_i)})}{\prod_{i=1}^{|I_0|} (1 - t^{m_i})}.
\end{equation}
\end{thm}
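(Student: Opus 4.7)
The plan is to factor the length generating function of alcoves in the dominant $W_0$-chamber through the hypercube tiling, and then invoke Bott's formula to solve for $h(W_0^{(i_0)};t)$. Let $W^{I_0} \subset W$ denote the set of minimum-length representatives of the cosets $W_0 \backslash W$; equivalently, by standard Coxeter theory, $W^{I_0}$ is the set of $w \in W$ such that $wC_e$ lies in the dominant $W_0$-chamber $\{v \in \mathcal{H}_\kappa : v_i \geq 0,\ \forall i \in I_0\}$. The length-additive decomposition $W = W_0 \cdot W^{I_0}$ yields $P_W(t) = P_{W_0}(t) \cdot P_{W^{I_0}}(t)$, and combining with Bott's formula $P_W(t) = P_{W_0}(t)/\prod_i (1-t^{m_i})$ gives
\[
P_{W^{I_0}}(t) = \prod_{i=1}^{|I_0|} \frac{1}{1 - t^{m_i}}.
\]

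Next, the dominant $W_0$-chamber is tiled by the translates $T_\mu(\mathcal{K}_\kappa)$ as $\mu$ ranges over the dominant coweight monoid $P^\vee_{I_0,+} \cong \N^{I_0}$, and each such translate contains a full copy of the $|W_0^{(i_0)}|$ alcoves of $\mathcal{K}_\kappa$. Hence every alcove in the dominant $W_0$-chamber is uniquely of the form $T_\mu(w'C_e)$ with $\mu \in P^\vee_{I_0,+}$ and $w' \in W_0^{(i_0)}$. I would then establish the length-additivity
\[
l(T_\mu w') = l(T_\mu) + l(w'),
\]
by showing that the affine hyperplanes separating $C_e$ from $w'C_e$ (which all cut the interior of $\mathcal{K}_\kappa$) are disjoint from those newly crossed under the dominant translation $T_\mu$ (which lie strictly between $\mathcal{K}_\kappa$ and $T_\mu(\mathcal{K}_\kappa)$). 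The dominance of $\mu$ is essential: it prevents any hyperplane from being inverted and then re-inverted along the concatenated alcove walk $C_e \to w'C_e \to T_\mu(w'C_e)$. (In small rank, e.g.\ $\widetilde{A}_2$, one can check directly that additivity fails for non-dominant $\mu$, so dominance is really needed.)

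Iterating dominant-translation length-additivity gives $l(T_\mu) = \sum_i n_i l(t_i)$ for $\mu = \sum_i n_i \omega^{(i)}$ with $n_i \geq 0$, since $l(t_i) = l(T_{\omega^{(i)} - \delta_i \omega^{(i_0)}})$ by Lemma \ref{trlem} (as $\gamma_i$ fixes $C_e$ setwise). Hence
\[
\sum_{\mu \in P^\vee_{I_0,+}} t^{l(T_\mu)} = \prod_{i \in I_0} \frac{1}{1 - t^{l(t_i)}},
\]
and combining with the hypercube decomposition yields $P_{W^{I_0}}(t) = h(W_0^{(i_0)};t) \cdot \prod_{i \in I_0} 1/(1-t^{l(t_i)})$. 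Substituting the closed form for $P_{W^{I_0}}(t)$ from the first paragraph and rearranging produces the asserted product formula for $h(W_0^{(i_0)};t)$. The main obstacle is making the length-additivity claim rigorous for all dominant $\mu$ simultaneously via inversion-set analysis; once this disjointness is established, the remainder is a routine cancellation of the common $P_{W_0}(t)$ factor.
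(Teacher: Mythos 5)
Your proof follows essentially the same route as the paper's: both factor $W$ (equivalently, the set of alcoves in the dominant $W_0$-cone) as $W_0$ times the dominant-translation monoid times the hypercube poset $W_0^{(i_0)}$, with length additivity, and then cancel against Bott's formula $h(W;t)=h(W_0;t)/\prod_i(1-t^{m_i})$. The only difference is presentational: you spell out the length-additivity $l(T_\mu w')=l(T_\mu)+l(w')$ via an inversion-set argument, whereas the paper justifies the whole triple decomposition $W=W_0H_+\varphi(W_0^{(i_0)})^{-1}$ at once by the fundamental-domain observation that $\varphi(W_0^{(i_0)})^{-1}H_+$ applied to $C_e$ tiles the cone $\{v\mid v_i\geq 0,\ \forall i\in I_0\}$.
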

Here, the elements $t_i \in W$ are the translations as defined in the Lemma \ref{trlem}.

Note that, evaluating the polynomial at $t= 1$ and using that $W_0$ decomposes into isomorphic copies of $W_0^{(i_0)}$, one copy for each extending vertex, we obtain \eqref{curid}.

We remark that there is always a way to rearrange the factors in the denominator, i.e., to assign to each vertex $i \in I_0$ an exponent $m_i$, so that $\frac{1-t^{l(t_i)}}{1-t^{m_i}} = 1 + t^{m_i} + t^{2m_i} + \cdots + t^{l(t_i)-m_i}$ is a polynomial.  In some sense, this can be done uniquely: see \S \ref{explsec}.

\begin{proof}
  Let $H_+ \subset W$ be the semigroup generated by the elements $t_i$
  defined in Lemma \ref{trlem}, i.e., the elements of the form
  $T_{i_1} T_{i_2} \cdots T_{i_m} \gamma$, where $i_1, \ldots, i_m \in
  I_0$, the $T_{i_j}$'s were defined in \ref{translations}, and
  $\gamma \in \Aut(\Gamma)$. We claim that
\begin{equation} \label{wdecomp1}
W = W_0 H_+ \varphi(W_0^{(i_0)})^{-1},
\end{equation}
where $W_0 \subset W$ is the subgroup generated by the reflections
$s_j$ for $j \in I_0$, and moreover that every element $w \in W$ has a
unique decomposition as $w = w_0 h \varphi(w')^{-1}$ where $w_0 \in
W_0$, $h \in H_+$, and $w' \in W_0^{(i_0)}$, satisfying
\begin{equation} \label{wdecomp2}
l(w) = l(w_0) + l(h) + l(\varphi(w')^{-1}).
\end{equation}
As a consequence, by taking Hilbert series (using the well known formulas \cite[Theorems 7.1.5, 7.1.10]{BB} for $h(W;t)$ and $h(W_0;t)$),
\begin{equation}
h(W,t) = \frac{h(W_0;t)}{\prod_{i=1}^{|I_0|} (1-t^{m_i})} = h(W_0;t)  \frac{1}{\prod_{i\in I_0} (1-t^{l(t_i)})} h(W_0^{(i_0)}),
\end{equation}
which proves the theorem, subject to proving \eqref{wdecomp1},
\eqref{wdecomp2}, which we do now. Applying both sides to the
fundamental Weyl chamber in $\mathcal{H}_{\kappa}$, the statement is
saying that a fundamental domain for $W_0 \subset W$ in
$\mathcal{H}_{\kappa}$ is given by the image of the dominant Weyl
chamber under $\varphi(W_0^{(i_0)})^{-1} H_+$, i.e., the cone $\{v \in
\mathcal{H}_{\kappa} \mid v_i \geq 0, \forall i \in I_0\}$.  This
follows from the fact that, under the projection $\mathcal{H}_{\kappa}
\iso \R^{I_0}$ by forgetting the $i_0$-coordinate, this cone is the
preimage of the dominant $W_0$-chamber in $\R^{I_0}$.
\end{proof}

\subsection{Explicit formulas for $h(W_0^{(i_0)})$}\label{explsec}

Below we give each Dynkin graph $\Gamma_0$, with vertices $i$ labeled by two positive integers: $l(t_i)$, and a Coxeter exponent $m_i$ such that $m_i \mid l(t_i)$, so that each $m_i$ occurs once. We will need the following notation: the \emph{odd part}, $\{ m \}_2$, of $m \in \Z_+$ is the maximal odd factor of $m$.

\begin{figure}[hbt]
\begin{center}
\input{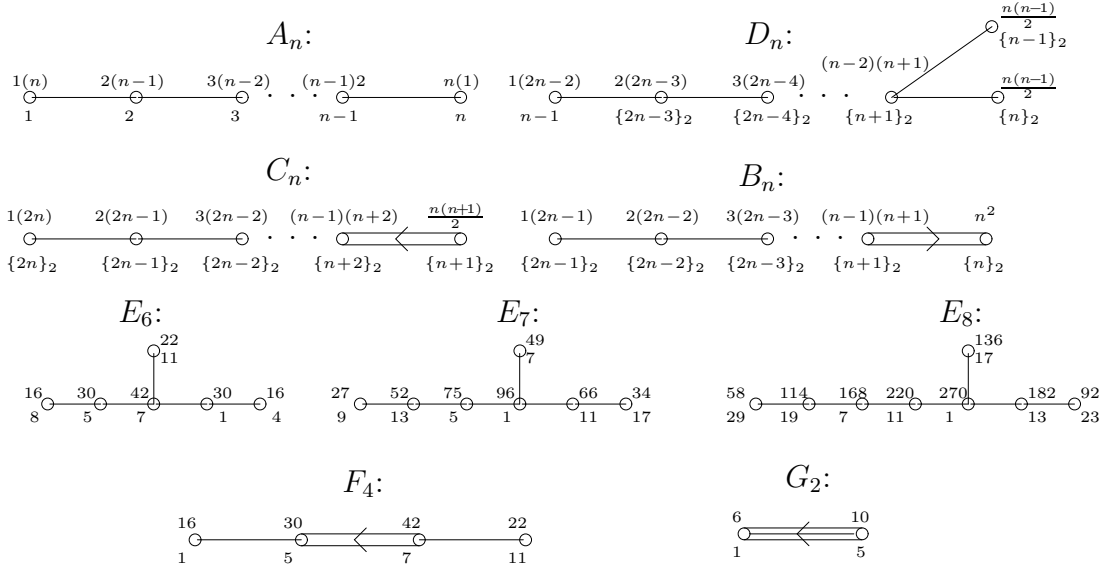}
\caption{The lengths $l(t_i)$ and Coxeter exponents $m_i$ for all Dynkin graphs}
\label{dynsfig}
\end{center}
\end{figure}

Set $Q_m(t) := \frac{t^m-1}{t-1} = 1 + t + t^2 + \cdots + t^{m-1}$. Then, by Theorem \ref{hilbpolythm},
\begin{equation} \label{hqfact}
h(W_0^{(i_0)};t) = \prod_{i \in I} Q_{\frac{l(t_i)}{m_i}}(t^{m_i}),
\end{equation}
which is a factorization of $h(W_0^{(i_0)})$ by polynomials whose nonzero coefficients are all $1$.

The top degrees of the posets $W_0^{(i_0)}$ are the degrees of the Hilbert polynomials, given from the figure by:
\begin{center}
\begin{tabular}{|c|c|c|c|c|c|c|c|c|}

$A_n$ & $B_n$ & $C_n$ & $D_n$ & $E_6$ & $E_7$ & $E_8$ & $F_4$ & $G_2$ \\ \hline
${n+2 \choose 3}$ & $\frac{n(n-1)(4n+1)}{6}$ & $\frac{n(n-1)(4n+1)}{6}$ &
 $4 {n \choose 3}$ & $120$ & $336$ & $1120$ & $86$ & $10$
\end{tabular} $\ $.
\end{center}

For the series of types $A$ and $D$, these degrees are cubic in the number of vertices; in the exceptional cases, one may find similar identities, such as $120=6 \cdot 5 \cdot 4, 336 = 8 \cdot 7 \cdot 6$, and $1120 = \frac{1}{3}(16 \cdot 15 \cdot 14)$). An upper bound on the number of valid moves in the numbers game required to go from any vector $v$ on the extended Dynkin graph satisfying $\delta \cdot v = 0$ to any element of its Weyl orbit is given by the sum of this degree and the weak order degree of $W_0$ (the latter being quadratic in the number of vertices), and is therefore cubic in the number of vertices.

Let us consider the question of how unique the assignment of the exponents $m_i$ to the vertices is such that $m_i \mid l(t_i)$.  For exceptional types, this assignment of the exponents to the vertices is unique, except in the $E_6$ case, where it is unique up to swapping $m_i$ with $m_{i'}$ when $i, i'$ are images of each other under an element of $\Aut(\Gamma_0)$. For each infinite series, one can make a uniqueness statement if one views the collection of graphs for all $n$ together.  For example, for types $A, B$, and $C$, if we label the vertices for such a series subject to the condition that the maximal segment of consecutively-numbered vertices goes to infinity, then this is the unique assignment of $m_i$ so that the $m_i$ are given by a polynomial in $n$ and $i$ or the odd part of such a polynomial. For type $D$, this is true except for the first vertex which is assigned a different polynomial, $n-1$.

We remark that, for types $B$ and $C$, even though their finite Weyl groups are identical, Hilbert polynomials $h(W_0^{(i_0)}(B_n);t) \neq h(W_0^{(i_0)}(C_n);t)$, and in particular $W_0^{(i_0)}(B_n) \not \cong W_0^{(i_0)}(C_n)$. However,  $h(W_0^{(i)}(B_n);1) = h(W_0^{(i)}(C_n);1)$; indeed, both must equal $\frac{1}{2} |W_0|$.  Moreover, $\deg h(W_0^{(i_0)}(B_n);t) = \deg h(W_0^{(i_0)}(C_n);t)$.

\subsection{Combinatorial interpretation of the Hilbert polynomial for type $A$} \label{combintsec}

In this section, we consider the case of $\tilde{A}_{n-1}$. In the proof of Theorem~\ref{hilbpolythm}, we computed the Hilbert series of the inverse image in $\mathcal{H}_{\kappa}$ of the dominant Weyl chamber for $A_{n-1}$; specifically, we showed that this series is $1/(1-t)(1-t^2)\cdots (1-t^{n-1})$. In~\cite[Section 9.4]{EE98}, Eriksson and Eriksson gave a combinatorial proof of this result. We sketch their proof, and explain how to modify it to give a combinatorial proof of Theorem~\ref{hilbpolythm}. Let here $W$ and $W_0$ be the affine and finite Weyl groups of types $\widetilde {A_{n-1}}$ and $A_{n-1}$, respectively.

Let $\tilde{S}_n$ be the set of permutations $i \mapsto s_i$ of the
integers such that $s_{i+n} = s_i +n$ and $\sum_{i=1}^n s_i =
\sum_{i=1}^n i$. Define a map $\partial : \tilde{S}_n \to \R^I$ by
$(\partial s)_i = s_i - s_{i-1}$; it is well known that this map is
injective and its image is $W \cdot (1,1,\ldots,1)$. So, we can
identify $\tilde{S}_n$ with $W$. Under this identification, the
dominant chamber consists of those permutations with $s_1 < s_2 <
\cdots < s_n$. The unit hypercube consists of those permutations
where, in addition, $s_1 + n > s_2$, $s_2+n > s_3$, \dots, and $s_{n-1}+n
> s_n$.

Given $(s_i)$ in the dominant chamber, define an $n$-tuple $(\gamma_1,
\gamma_2, \ldots, \gamma_{n-1})$ as follows: Let $i$ be an integer
between $1$ and $n-1$. Let $U_i$ be the set of integers $t$ such that
$t < s_{i+1}$ and $t \not \equiv s_{i+1}$, $s_{i+2}$, \dots, $s_n
\pmod n$. Number the elements of $U_i$ as $u_0 > u_1 > u_2 >
\ldots$. So, $s_{i}$ is in $U$; define $\gamma_i$ by $s_{i} =
u_{\gamma_i}$.

\begin{rem}
The integer $\gamma_i$ is the number of times that $n-i$ occurs in the sequence $\delta_{\bullet}$ constructed in~\cite{EE98}.
\end{rem}

It is easy to see that this is a bijection between the dominant chamber and $\Z_{\geq 0}^{n-1}$. It is also clear that the unit hypercube corresponds to the set $0 \leq \gamma_i < i$.

\begin{prop}
Let $(s_i)$ be in the dominant chamber. Under this bijection, the length of $(s_i)$ is $\sum (n-i) \gamma_i$.
\end{prop}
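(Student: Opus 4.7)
The plan is to evaluate both sides via the standard inversion formula for length in $\tilde{S}_n$ and then reorganize the resulting sum.

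First, I would use (or briefly re-derive from strong convergence and the fact that the simple reflections act by transposing adjacent values) the formula
\[
l(s) \;=\; \#\{(a,j) : 1 \leq a \leq n,\ j > a,\ s_a > s_j\}
\]
for the length of an affine permutation, counting inversions out of the window. Specializing to a dominant $(s_i)$, I would write each $j > a$ as $j = b + kn$ with $1 \leq b \leq n$ and $k \geq 0$, so that $s_j = s_b + kn$. The case $k = 0$ forces $b > a$ and requires $s_a > s_b$, impossible in the dominant chamber; the case $b = a$, $k \geq 1$ gives $s_a > s_a + kn$, also impossible. For $b < a$ and $k \geq 1$, the inequality $s_a > s_b + kn$ becomes $k < (s_a - s_b)/n$, contributing $\lfloor (s_a - s_b - 1)/n \rfloor$ values of $k$. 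Thus
\[
l(s) \;=\; \sum_{1 \leq b < a \leq n} \left\lfloor \frac{s_a - s_b - 1}{n} \right\rfloor.
\]

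Next, I would reinterpret each floor as a cardinality: $\lfloor (s_a - s_b - 1)/n \rfloor$ equals the number of integers $t = s_b + kn$, $k \geq 1$, lying in the open interval $(s_b, s_a)$, or equivalently the number of $t \in (s_b, s_a)$ with $t \equiv s_b \pmod n$. So $l(s)$ counts triples $(b,a,t)$ with $1 \leq b < a \leq n$, $s_b < t < s_a$, and $t \equiv s_b \pmod n$. For each such triple, since $s_1 < s_2 < \cdots < s_n$, there is a unique $i$ with $s_i < t < s_{i+1}$, and this $i$ satisfies $b \leq i < a$ (because $s_b \leq s_i$ forces $b \leq i$, and $s_{i+1} \leq s_a$ forces $i < a$). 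I would then swap the order of summation and sum first over $i$, then over $t$, then over $(a,b)$.

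Finally, I would use that $\{s_1, \ldots, s_n\}$ forms a complete system of residues modulo $n$ (built into the definition of $\tilde S_n$ from $s_{i+n} = s_i + n$ and the permutation property). For fixed $i$ and $t \in (s_i, s_{i+1})$, the residue of $t$ mod $n$ determines a unique $b \in \{1,\ldots,n\}$ with $s_b \equiv t \pmod n$, and the constraint $b \leq i$ is equivalent to $t \not\equiv s_j \pmod n$ for every $j \in \{i+1, \ldots, n\}$, i.e.\ to $t \in U_i$. Meanwhile $a$ ranges freely over $\{i+1,\ldots,n\}$, contributing a factor of $n - i$. Hence
\[
l(s) \;=\; \sum_{i=1}^{n-1} (n-i)\cdot \#\{t \in U_i : s_i < t < s_{i+1}\} \;=\; \sum_{i=1}^{n-1}(n-i)\gamma_i,
\]
since the indicated subset of $U_i$ is exactly the set of $u_j \in U_i$ with $j < \gamma_i$, which has cardinality $\gamma_i$ by definition. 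The main delicate point is the bookkeeping in the reindexing step and the use of the complete-residue-system property to identify $b$; everything else is routine counting.
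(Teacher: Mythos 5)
Your argument is correct, but it takes a genuinely different route from the paper. You compute $l(s)$ globally via the standard inversion formula for affine permutations, reduce it (for dominant $s$) to $\sum_{b<a}\lfloor (s_a-s_b-1)/n\rfloor$, and then recount the inversions as triples $(b,a,t)$ grouped by the gap $(s_i,s_{i+1})$ containing $t$; the complete-residue-system property then identifies the admissible $t$ for fixed $i$ with the $\gamma_i$ elements of $U_i$ exceeding $s_i$, while $a$ contributes the factor $n-i$. The paper instead argues by induction on $\sum\gamma_i$: it characterizes when $\gamma_1=\cdots=\gamma_{j-1}=0$ in terms of the inequality $s_1+n>s_j$, constructs from $s$ a shorter dominant element $s'$ whose $\gamma$-vector has $\gamma_j$ decreased by one, and extracts the length drop $n-j$ from a length-additive factorization $t\circ s\circ t^{-1}=\omega\circ s'$ with $\omega$ an $(n-j+1)$-cycle. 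Your approach buys a transparent, non-inductive combinatorial interpretation of each summand $(n-i)\gamma_i$ and avoids verifying the length-additivity of the paper's factorization, at the cost of invoking the inversion formula for $\ell$ in $\tilde S_n$ (which is standard and safe to cite). Two small points to tidy: in your case analysis of $j=b+kn$ you omit the case $b>a$, $k\geq 1$, which is empty since then $s_a<s_b<s_b+kn$ but should be stated; and the passage from counting $k<(s_a-s_b)/n$ to the floor $\lfloor (s_a-s_b-1)/n\rfloor$ silently uses that $n\nmid s_a-s_b$, which again follows from the complete-residue-system property and deserves a word.
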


\begin{proof}
We claim that $s_1 +n > s_{j}$ if and only if $\gamma_{1} = \gamma_{2} = \cdots = \gamma_{j-1} =0$.  Proof: There are precisely $j-1$ elements of $U_{j-1}$ which are greater than $s_{j}-n$.  Now, $s_1$ is greater than $s_{j}-n$; if and only if $s_1$, $s_2$, \dots, $s_{j-1}$ are all greater than $s_j-n$; if and only if $s_1$, $s_2$, \dots, $s_{j-1}$ are the $j-1$ largest elements of $U_{j-1}$; if and only if $\gamma_{1} = \gamma_{2} = \cdots = \gamma_{j-1} =0$.

We prove the proposition by induction on $\sum \gamma_i$.  The result is obvious when all of the $\gamma_i$ are zero, so we may assume this is not true.  Let $\gamma_{1} = \gamma_{2} = \cdots = \gamma_{j-1} =0$ and $\gamma_{j}>0$.  So $s_{j} < s_1+n < s_{j+1}$.  Consider the element $s'$, in the dominant chamber, where $(s'_1, s'_2, \ldots, s'_n) = (s_2 -1, s_3 -1, \ldots, s_j -1, s_1 +n-1, s_{j+1} -1, \ldots, s_n-1)$. It is not hard to verify that the new $\gamma$ vector is $(0,0,\ldots,0,\gamma_j -1, \gamma_{j+1}, \ldots, \gamma_{n-1})$. Let $t$ denote $x \mapsto x+1$; conjugation by $t$ is a length preserving automorphism of $\widetilde{S}_n$.  Then, $t \circ s \circ t^{-1} = \omega \circ s' $, where $\omega$ is an $(n-j+1)$ cycle and the right hand side is length-additive, so $\ell(s) = \ell(s')+(n-j)$.
\end{proof}

So we have a bijection between the unit hypercube and $\{ 0 \} \times \{ 0,1 \} \times \cdots \times \{ 0,1,\ldots,n-2 \}$ where the element corresponding to $(\gamma_1, \gamma_2, \ldots, \gamma_{n-1})$ has length $\sum (n-i) \gamma_i$. This gives a bijective proof that the Hilbert series of the unit hypercube is $\prod_{i=1}^{n-1} \left( 1+t^{n-i} + t^{2(n-i)} + \cdots + t^{(i-1)(n-i)} \right)$.

We note that Eriksson and Eriksson also give combinatorial proofs of the Hilbert series for other classical types; these proofs might be able to be similarly adapted.

\bibliographystyle{amsalpha}
\bibliography{references}
\end{document}